\documentclass[english]{article}
\pdfoutput=1
\usepackage[T1]{fontenc}
\usepackage[letterpaper]{geometry}
\geometry{verbose}
\usepackage{float}
\usepackage{amsmath}
\usepackage{graphicx}
\usepackage{color}
\usepackage{epstopdf}

\usepackage[colorlinks, hypertexnames=false]{hyperref}
\hypersetup{linkcolor=blue, citecolor=red} 
\usepackage{algorithm}
\usepackage{algorithmic}


\usepackage{adjustbox}
\usepackage{bm}
\usepackage{multirow}
\usepackage{array}
\usepackage{mathrsfs}
\usepackage{amsmath, amssymb, amsthm}
\usepackage{subfigure}
\usepackage{tikz}
\numberwithin{equation}{section}
\usepackage{pifont}
\newcommand*{\Max}{\mathop{\mathrm{max}}}
\newcommand*{\Min}{\mathop{\mathrm{min}}}
%
%
%
%
%
%
%
%

\graphicspath{{./Figures/}}

\newtheorem{definition}{Definition}[section]
\newtheorem{theorem}{Theorem}[section]

\newtheorem{remark}{Remark}[section]

\usepackage{amsfonts}\setlength{\topmargin}{0mm} \setlength{\textheight}{220mm}
\setlength{\oddsidemargin}{0mm}
\setlength{\evensidemargin}{\oddsidemargin}
\setlength{\textwidth}{160mm} \setlength{\baselineskip}{18pt}





\linespread{1}
\usepackage{color}
\usepackage{adjustbox}
\definecolor{black}{rgb}{0,0,0}

\definecolor{red}{rgb}{1,0,0}

\definecolor{blue}{rgb}{0,0,1}






\usepackage{multirow}

\makeatother
\usepackage{babel}
\usepackage{authblk}
\title{}

%

\title{\textbf{Multiscale finite element method for Stokes-Darcy model}}
\author{Yachen Hong\thanks{Department of Mathematics,East China Normal
		University, Shanghai. }, \quad
	Wenhan Zhang \thanks{Department of Mathematics,East China Normal
		University, Shanghai. }, \quad
	Lina Zhao\thanks{Department of Mathematics, City University of Hong Kong, Kowloon Tong, Hong Kong SAR.}\quad
Haibiao Zheng\thanks{Department of Mathematics, Key Laboratory of MEA(Ministry of Education) and Shanghai Key Laboratory of PMMP, East China Normal University, Shanghai.}\;
	
}

\begin{document}
\maketitle
	\begin{abstract}

This paper explores the application of the multiscale finite element method (MsFEM) to address steady-state Stokes-Darcy problems with BJS interface conditions in highly heterogeneous porous media. We assume the existence of multiscale features in the Darcy region and propose an algorithm for the multiscale Stokes-Darcy model. During the offline phase, we employ MsFEM to construct permeability-dependent offline bases for efficient coarse-grid simulation, with this process conducted in parallel to enhance its efficiency. In the online phase, we use the Robin-Robin algorithm to derive the model's solution. Subsequently, we conduct error analysis based on $L^2$ and $H^1$ norms, assuming certain periodic coefficients in the Darcy region.
To validate our approach, we present extensive numerical tests on highly heterogeneous media, illustrating the results of the error analysis.
		
	\end{abstract}
	
	\textbf{Keywords:} MsFEM; multiscale; steady Stokes-Darcy flow; highly heterogeneous; Robin-Robin algorithm.
	
	\pagestyle{myheadings} \thispagestyle{plain} \markboth{Lina}
	{MsFEM for heterogeneous Stokes-Darcy model}

	\section{Introduction}\label{sec:offline1}
	    In this paper, we design and analyze an efficient numerical method based on the  multiscale finite element
	    method (MsFEM) framework for Stokes-Darcy system. The Stokes-Darcy problem has gained significant attention over the last decade, particularly following the influential works by Discacciati et al.\cite{discacciati2002mathematical} and Layton et al \cite{layton2002coupling}. Such models can be used to describe pysiological phenomena like hydrological systems in which surface water percolates through rocks and sand, and various industrial processes involving filtration, reservoir simulation, nuclear water storage and underground water contamination. In these fields, it is often encountered with situations involving multiple scales; for example complex rock matrices when modelling sub-surface flows or random placements of buildings, people and trees in the context of urban canopy flows. Hence, it is necessary to take into consideration the presence of multiple scales in the Stokes-Darcy model.
	
	These multiscale Stokes-Darcy problems can arise from either the presence of highly oscillatory coefficients within the system or the heterogeneity of the domain; These problems can be very challenging due to the necessary resolution for achieving meaningful results. Constrained by computational cost and prohibitive size, numerous practical problems remain beyond reach through direct simulations. On the other hand, there is currently no effective algorithm for the multiscale problem of Stokes-Darcy with BJS (Beavers-Joseph-Saffman) interface conditions. Thus, it is desirable to develop an effcient computational algorithm to solve multiscale problems without being confined to solving fine scale solutions and establish the corresponding error analysis.

    Prior to presenting our multiscale Stokes-Darcy  multiscale techniques and their application to the Stokes-Darcy problem. In addition to some traditional upscaling techniques, nowadays, various multiscale methods can be employed to solve multi-scale problems. One type of multiscale method involves generating new basis functions, where the aim is to solve the problem on a coarse grid using carefully designed multiscale basis functions. Notable multiscale methods such as multiscale finite method  (MsFEM) by Hou and Wu \cite{hou1997multiscale,chen2003mixed}. MsFEM's applicability broadens to situations where analytical representations of microscopic elements are unavailable, given that the multiscale basis is calculated rather than modeled. Within the last decades, several methods sprung from similar purpose namely, the generalized multiscale finite element method (GMsFEM) \cite{efendiev2013generalized,chung2015mixed},
    the multiscale finite volume method (MsFVM) \cite{hajibeygi2009multiscale,jenny2003multi,efendiev2006accurate},
    the heterogeneous multiscale method (HMM) \cite{weinan2007heterogeneous}, the
    variational multiscale method (VMS) \cite{hughes98},  the multiscale mortar mixed finite
    element method (MMMFEM) \cite{ArPeWY07}, the localized orthogonal method (LOD) \cite{maalqvist2014localization}, the multiscale hybrid-mixed method (MHM) \cite{araya2013multiscale}. Multiscale methods have demonstrated their ability to handle the complexity associated with industry-standard grid representation and flow physics.
    
      Several theoretical and numerical studies
   have been done in the couple years to solve Stokes-Darcy problem. Jun Yao et al. \cite{zhang2016multiscale} presented a multiscale mixed finite element method (MsMFEM) for fluid flow in fractured vuggy media. 
   ABDULLE et al. \cite{abdulle2015adaptive} introduced Darcy-Stokes finite element heterogeneous multiscale method (DS-FE-HMM) in porous media. Girault et al. \cite{girault2014mortar} investigated mortar multiscale numerical methods for coupled Stokes
   and Darcy flows with the Beavers-Joseph-Saffman(BJS) interface condition. Ilona Ambartsumyan et al.\cite{ambartsumyan2020stochastic}  introduced stochastic multiscale flux basis for Stokes-Darcy flows. To date, although there are many multiscale methods available for addressing the Stokes-Darcy problem, MsFEM has not been applied to the Stokes-Darcy problem with BJS interface conditions and conducted theoretical error analysis.
     
     In this paper we propose an Msfem method to solve steady Stokes-Darcy problem with BJS interface condition and derive a fully a priori error analysis.
     Our objective is to propose effective methods that minimize computational efforts when dealing with multiscale phenomena in the Darcy region. 
     While the basis functions of MsFEM have seen widespread use, their application in the context of steady-state Stokes-Darcy problems with BJS interface conditions is notably lacking, let alone the presence of comprehensive error analysis. We use multiscale finite methods in the Darcy region, assuming the presence of multiscale phenomena, and apply standard finite element methods in the Stokes region, with the coupling between the two regions established through an interface. The multiscale finite element basis functions are inspired by Hou's work \cite{hou1997multiscale,chen2003mixed} and generated using parallel methods. In the Stokes region, we employ standard MINI elements for the basis functions. Additionally, we utilize the Robin-Robin algorithm to obtain the final solution. Then, we conducted error analysis in terms of $L^2$ and $H^1$, assuming the Darcy region possesses certain periodic coefficients. Finally, numerical examples will validate the effectiveness of our algorithm and error analysis.

	The rest of the paper is organized as follows. In section \ref{sec:pre}, we introduce the Stokes-Darcy model.
	Next, we introduced the finite element space and the multiscale basis function space.
	 Meanwhile, we also introduced some homogenization principles and certain model results. The Multiscale finite Stokes-Darcy algorithm are presented in Section \ref{sec:offline}.
	 It is mainly divided into two parts: offline and online phase. The corresponding $H^1$ and $L^2$ error estimates is
	shown in Section  \ref{sec:convergence}.
	Numerical experiments are presented in Section \ref{sec:numerical}. 
	
	\section{Preliminaries.}\label{sec:pre}
	\subsection{Stokes-Darcy model with BJS interface condtion}
    Let us consider the following mixed model for coupling a fluid flow and a porous media flow in a bounded domain $\Omega \subset \mathbf{R}^d, d=2,3$. Here $\Omega=\Omega_f \cup \Gamma \cup \Omega_p$, where $\Omega_f$ and $\Omega_p$ are two disjoint, connected and bounded domains occupied by fluid flow and porous media flow and $\Gamma=\bar{\Omega}_f \cap \bar{\Omega}_p$ is the interface. For simplicity, we assume $\partial \Omega_p$ and $\partial \Omega_f$ are smooth enough in the rest of this paper. We denote $\Gamma_f=\partial \Omega_f \cap \partial \Omega, \Gamma_p=\partial \Omega_p \cap \partial \Omega$ and we also denote by $\mathbf{n}_p$ and $\mathbf{n}_f$ the unit outward normal vectors on $\partial \Omega_p$ and $\partial \Omega_f$, respectively. See Fig. \ref{fig:sd} for a sketch.
		\begin{figure}[ht]
		\centering
		\includegraphics[width=3in,height=2in]{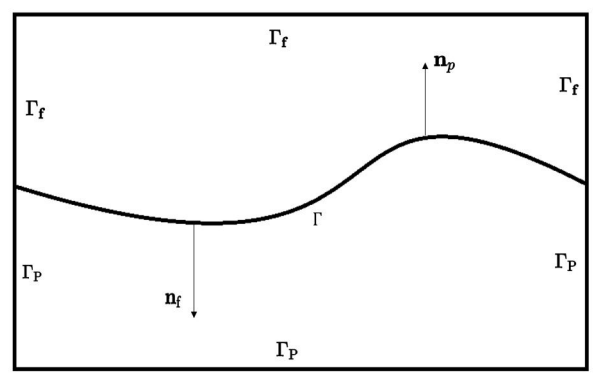}
		\caption\centering{A global domain $\Omega$ consisting of a fluid flow region $\Omega_f$ \protect\\ and a porous media flow region $\Omega_p$ separated by an interface $\Gamma$.}
		\label{fig:sd}
	\end{figure}

      The fluid motion in the fluid region $\Omega_f$ is governed by the Stokes equations
      \begin{equation}\label{Stokes}
       \begin{cases}-\nabla \cdot\left(\mathbb{T}\left(\mathbf{u}_f, p_f\right)\right)=\mathbf{g}_f, & \text { in } \Omega_f, \\ \nabla \cdot \mathbf{u}_f=0, & \text { in } \Omega_f,\end{cases}
      \end{equation}

     where
     $$
     \mathbb{T}\left(\mathbf{u}_f, p_f\right)=-p_f \mathbb{I}+2 v \mathbb{D}\left(\mathbf{u}_f\right), \quad \mathbb{D}\left(\mathbf{u}_f\right)=\frac{1}{2}\left(\nabla \mathbf{u}_f+\nabla^T \mathbf{u}_f\right),
     $$
     are the stress tensor and the deformation rate tensor, $v>0$ is the kinetic viscosity and $\mathbf{g}_f$ is the external force.

     We assume that the porous region possesses multiple scales. The fluid motion in the porous medium region $\Omega_p$ is  governed by

     \begin{equation}\label{DARCY}
      \begin{cases}\nabla \cdot \mathbf{u}_d=g_p, & \text { in } \Omega_p, \\ \mathbf{u}_d=-\mathbb{K}_{\epsilon} \nabla \phi_p, & \text { in } \Omega_p,\end{cases}
     \end{equation}

     where $\mathbb{K}_{\epsilon}$ denotes the hydraulic conductivity in $\Omega_p$, which is a positive symmetric tensor and includs multiscale information where $\epsilon$ is a small parameter and $g_p$ is a source term. The first equation is the saturated flow model and the second equation is the Darcy's law. Here $\phi_p=z+\frac{p_p}{\rho g}$ is the piezometric (hydraulic) head, where $p_p$ represents the dynamic pressure, $z$ the height from a reference level, $\rho$ the density and $g$ the gravitational constant, and $\mathbf{u}_d$ is the flow velocity in the porous medium which is proportional to the gradient of $\phi_p$, namely, the Darcy's law.

      Combining the two equations in (\ref{DARCY}), we get the equation for the piezometric head, which we will refer to simply as the Darcy equation:
     \begin{equation}\label{DARCY2}
     -\nabla \cdot\left(\mathbb{K}_{\epsilon} \nabla \phi_p\right)=g_p, \quad \text { in } \Omega_p .
     \end{equation}

     Eqs. (\ref{Stokes}) and (\ref{DARCY2}) are completed and coupled together by the following boundary conditions:
     $$
     \mathbf{u}_f=0 \quad \text { on } \Gamma_f,  \quad \phi_p=0 \quad \text { on } \Gamma_{p},
     $$
     and the interface conditions on $\Gamma$ :
  	\begin{align}
  	&\mathbf{u}_f\cdot\mathbf{n}_f=\mathbf{u}_p\cdot\mathbf{n}_p=-\mathbb{K}_{\epsilon}\nabla \phi_p\cdot \mathbf{n}_p, \label{mass}\\
  	&-\mathbf{n}\cdot \mathbf{T}(\mathbf{u}_f,p_f)\cdot\mathbf{n}=g(\phi_p-z), \label{momentum}  \\
  	&-\boldsymbol{\tau}_i\cdot \mathbf{T}(\mathbf{u}_f,p_f)\cdot\mathbf{n}_f=\frac{\alpha \nu \sqrt{d}}{\sqrt{\text{trace}(\Pi)}} \mathbf{u}_f\cdot\mathbf{\boldsymbol{\tau}_i}.\label{BJS}
  \end{align} 
  where $\boldsymbol{\tau}_i,i=1,\cdots, d-1$ is an orthonormal basis of the tangential space on $\Gamma$ and $g$ the gravitational acceleration and we assumed that $g=1$ in the following. $ \alpha$ is an experimentally determined parameter and $\Pi$ represents the permeability, which has the following relation with the hydraulic conductivity, $\mathbb{K}_{\epsilon}=\frac{\Pi g}{v}$. The first interface condition (\ref{mass}) describes the mass conservation and the second equation (\ref{momentum}) represents the balance of momentum. The third interface condtion (\ref{BJS}) is called Beavers-Joseph-Saffman condition, which means the tangential components of the normal stress force is proportional to the tangential components of the fluid velocity\cite{beavers1967boundary}.
  
   Furthermore, we assume $\mathbb{K}_{\epsilon}$ is symmetric, perodic, and uniformly elliptic. There exist two constants $\lambda_{\max }>0, \lambda_{\min }>0$ such that $$0<\lambda_{\min }|\mathbf{x}|^2 \leq \mathbb{K}_{\epsilon} \mathbf{x} \cdot \mathbf{x} \leq \lambda_{\max }|\mathbf{x}|^2, \quad \forall \mathbf{x} \in \Omega_p$$  
   
   Also, we assume 
   $$
   \mathbf{g}_f \in \mathbf{L}^2\left(\Omega_f\right), \quad g_p \in L^2\left(\Omega_p\right), \quad \mathbb{K}_{\epsilon} \in L^{\infty}\left(\Omega_p\right)^{d \times d} .
   $$

 For simplicity, we consider 2D problems through the full text. We reserve $\Omega$ for a domain (bounded and open set) with Lipschitz boundary and $d$ for spacial dimension $(d=2)$. The Einstein summation convention is adopted, means summing repeated indexes from 1 to $d$. 
  The Sobolev spaces $W^{k, p}$ and $H^k$ are defined as usual (see e.g., \cite{brenner2008mathematical}) and we abbreviate the norm Sobolev space $H^k(D)$ as $\|\cdot\|_{k, D}$.
 

 Later on we need to introduce some Hilbert spaces
   \begin{equation}
   	\begin{array}{l}
   		\mathbf{V}_f=\left\{\mathbf{v}_f\in \mathbf{H}^1 (\Omega_f):\mathbf{v}_f|_{\Gamma_{f,D}}=0\right\},\\
   		X_p=\left\{\psi\in H^1 (\Omega_p):\psi|_{\Gamma_{p,D}}=0\right\},\\
   		Q_f=L_0^2(\Omega_f)=\left\{q_f\in L^2(\Omega_f):\int_{\Omega_f}	q_f=0	\right\}.\\
   	\end{array}
   \end{equation}
The space $L^2(D)$, where $D=\Omega_{\mathrm{f}}$ or $\Omega_{\mathrm{p}}$, is equipped with the usual $L^2$-scalar product $(\cdot, \cdot)$ and $L^2$-norm $\|\cdot\|_{L^2(D)}$. The spaces $H_{\mathrm{f}}$ and $H_{\mathrm{p}}$ are equipped with the following norms:
$$
\begin{gathered}
\|\nabla u\|_{L^2\left(\Omega_{\mathrm{f}}\right)}=\sqrt{(\nabla u, \nabla u)_{\Omega_{\mathrm{f}}}} \quad \forall u \in \mathbf{V}_f, \\
	\|\nabla \phi\|_{L^2\left(\Omega_{\mathrm{p}}\right)}=\sqrt{(\nabla \phi, \nabla \phi)_{\Omega_{\mathrm{p}}}} \quad \forall \phi \in X_p .
\end{gathered}
$$

  Let us denote 
   \begin{align*}
   	U=\mathbf{V}_f\times X_p.
   \end{align*}
Hence, we use the notational convention that $\underline{\mathbf{u}}=(\mathbf{u}_f,\phi_p)$ and $\underline{\mathbf{v}}=(\mathbf{v}_f,\psi_p)$. They all belong to $\mathbf{U}$: for $\mathbf{g_f} \in \mathbf{L}^2(\Omega_f)$ and $g_p\in L^2(\Omega_p)$, find $(\underline{\mathbf{u}},p_f)\in \mathbf{U}\times Q_f$ such that $\forall(\underline{\mathbf{v}},q_f)\in \mathbf{U}\times Q_f $ and $\mathbf{U}^{\prime}$ is the dual space of $\mathbf{U}, P_\tau(\cdot)$ is the projection onto the local tangential plane that can be explicitly expressed as $P_\tau\left(\mathbf{v}_f\right)=\mathbf{v}_f-\left(\mathbf{v}_f \cdot \mathbf{n}_f\right) \mathbf{n}_f$
   
   \begin{equation}\label{nn}
   	a(\underline{\mathbf{u}},\underline{\mathbf{v}})-(p_f,\nabla\cdot \mathbf{v_f})_{\Omega_f}+(q_f,\nabla\cdot \mathbf{u_f})_{\Omega_f}=\left(\mathbf{F},\underline{\mathbf{v}}\right)_{\mathbf{U}^{\prime}},
   \end{equation}
   where 	
   
   \begin{equation}
   	\begin{aligned}
   		& a(\underline{\mathbf{u}}, \underline{\mathbf{v}})=2 \nu\left(\mathbb{D}\left(\mathbf{u}_f\right), \mathbb{D}\left(\mathbf{v}_f\right)\right)_{\Omega_f}+g\left(\phi_p, \mathbf{v}_f \cdot \mathbf{n}_f\right)_{\Gamma} \\
   		& \quad+\left(\frac{\alpha v \sqrt{d}}{\sqrt{\operatorname{trace}(\boldsymbol{\Pi})}} P_{\boldsymbol{\tau}}\left(\mathbf{u}_f\right), \mathbf{v}_f\right)_{\Gamma} \\
   		& \quad+g\left(\mathbb{K}_{\epsilon} \nabla \phi_p, \nabla \psi_p\right)_{\Omega_p}-g\left(\psi_p, \mathbf{u}_f \cdot \mathbf{n}_f\right)_{\Gamma},\\
   		&\left(\mathbf{F}, \underline{\mathbf{v}}\right)_{\mathbf{U}^{\prime}}=\left(\mathbf{g}_f, \mathbf{v}_f\right)_{\Omega_f}+g\left(g_p, \psi_p\right)_{\Omega_p}+g\left(z, \mathbf{v}_f \cdot \mathbf{n}_f\right)_{\Gamma}.
   	\end{aligned}
   \end{equation}
   
   We know that there exists a positive constant $\beta>0$ such that the following Ladyzhenskaya-Babuš kaBrezzi (LBB) condition holds:
   $$
   \inf _{q_f \in Q_f} \sup _{\mathbf{v}_f \in \mathbf{X}_f} \frac{\left(q_f, \nabla \cdot \mathbf{v}_f\right)_{\Omega_f}}{\left\|q_f\right\|_{Q_f}\left\|\mathbf{v}_f\right\|_{\mathbf{x}_f}} \geq \beta .
   $$  
   \begin{theorem}
   [Proofs in \cite{wilbrandt2019stokes} ]	The weak formulation (\ref{nn}) of Stokes-Dacry problem is well-posed.
   \end{theorem}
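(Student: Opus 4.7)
The plan is to cast (\ref{nn}) as a standard generalized saddle-point problem on $\mathbf{U}\times Q_f$ and invoke the Babu\v{s}ka--Brezzi theorem. Well-posedness then reduces to three ingredients: continuity of $a(\cdot,\cdot)$ and of the pressure--divergence pairing, coercivity of $a(\cdot,\cdot)$ on $\mathbf{U}$ (which is stronger than the kernel-coercivity Brezzi actually requires), and the LBB inf-sup condition on $(\mathbf{V}_f,Q_f)$, which is already stated above.

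First I would verify continuity. The Stokes viscous term and the divergence pairing $(q_f,\nabla\cdot\mathbf{v}_f)$ are bounded by Cauchy--Schwarz; the Darcy form is controlled by the upper bound $\lambda_{\max}$ on $\mathbb{K}_\epsilon$. The three interface integrals on $\Gamma$ are handled by combining the continuity of the trace operator $H^1\hookrightarrow L^2(\Gamma)$ with the $L^\infty(\Gamma)$ bound on the BJS friction coefficient $\alpha\nu\sqrt{d}/\sqrt{\operatorname{trace}(\Pi)}$, which in turn follows from the uniform ellipticity of $\Pi$.

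The key step is coercivity. Testing $a(\underline{\mathbf{u}},\underline{\mathbf{v}})$ with $\underline{\mathbf{v}}=\underline{\mathbf{u}}$, the two skew-paired coupling integrals $g(\phi_p,\mathbf{u}_f\cdot\mathbf{n}_f)_\Gamma$ and $-g(\phi_p,\mathbf{u}_f\cdot\mathbf{n}_f)_\Gamma$ cancel exactly---this is precisely the algebraic structure designed to produce a symmetric coupling at the level of the full saddle-point system. What remains is
\[
a(\underline{\mathbf{u}},\underline{\mathbf{u}}) = 2\nu\|\mathbb{D}(\mathbf{u}_f)\|_{L^2(\Omega_f)}^2 + \Bigl(\tfrac{\alpha\nu\sqrt{d}}{\sqrt{\operatorname{trace}(\Pi)}}P_\tau(\mathbf{u}_f),\mathbf{u}_f\Bigr)_\Gamma + g(\mathbb{K}_\epsilon\nabla\phi_p,\nabla\phi_p)_{\Omega_p},
\]
a sum of three nonnegative contributions (note $P_\tau(\mathbf{u}_f)\cdot\mathbf{u}_f=|P_\tau(\mathbf{u}_f)|^2$). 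Korn's inequality, applicable because $\mathbf{u}_f=0$ on $\Gamma_f$, upgrades the first term to $c\|\mathbf{u}_f\|_{H^1(\Omega_f)}^2$. Uniform ellipticity of $\mathbb{K}_\epsilon$ together with Poincar\'e's inequality (using $\phi_p=0$ on $\Gamma_p$) upgrades the Darcy term to $c\|\phi_p\|_{H^1(\Omega_p)}^2$. Hence $a(\underline{\mathbf{u}},\underline{\mathbf{u}})\geq c\|\underline{\mathbf{u}}\|_\mathbf{U}^2$.

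Combining continuity, coercivity on $\mathbf{U}$, and the stated LBB condition, Brezzi's theorem delivers a unique $(\underline{\mathbf{u}},p_f)\in\mathbf{U}\times Q_f$ satisfying (\ref{nn}), with the stability bound $\|\underline{\mathbf{u}}\|_\mathbf{U}+\|p_f\|_{Q_f}\leq C\|\mathbf{F}\|_{\mathbf{U}'}$. I expect the main subtlety to lie not in the abstract Brezzi machinery but in the interface bookkeeping: one must verify that the normal trace $\mathbf{v}_f\cdot\mathbf{n}_f$ and the scalar trace of $\psi_p$ indeed pair in a well-defined duality on $\Gamma$, and that the Korn constant on $\mathbf{V}_f$ does not degenerate---both points depending on $\Gamma_f$ having positive surface measure, which is implicit in the problem setup.
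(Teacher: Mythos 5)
Your proposal is correct and coincides with the standard argument that the paper simply defers to via the citation of \cite{wilbrandt2019stokes}: the skew-symmetric interface coupling terms cancel when testing with $\underline{\mathbf{v}}=\underline{\mathbf{u}}$, coercivity on $\mathbf{U}$ follows from Korn's and Poincar\'e's inequalities (using the homogeneous conditions on $\Gamma_f$ and $\Gamma_p$) together with the uniform ellipticity of $\mathbb{K}_\epsilon$ and the nonnegativity of the BJS term, and the LBB condition then gives well-posedness through the Babu\v{s}ka--Brezzi theory. Since the paper itself presents no proof, there is nothing further to compare; your treatment of the trace pairings and of the nondegeneracy of the Korn constant addresses exactly the points the cited reference handles.
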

  \begin{remark}
	For the purpose of later analysis, we recall some inequalities: $\forall v \in H^1(D)$
	$$
	\begin{aligned}
		& \|v\|_{L^2(\partial D)} \leq c_0\|v\|_{L^2(D)}^{\frac{1}{2}}\|v\|_{H^1(D)}^{\frac{1}{2}}, \\
		& \|v\|_{L^2(\partial D)} \leq c_1\|v\|_{H^1(D)}, \\
		& \|\nabla v\|_{L^2(D)} \leq c_2\|\mathrm{D}(v)\|_{L^2(D)} .
	\end{aligned}
	$$
\end{remark}

	\subsection{Finite element approximation }
  We give the finite element approximation of this model.
For any given small parameter $h>0$, we construct the regular triangulations $\mathcal{T}_h, \mathcal{T}_{f h}$ and $\mathcal{T}_{p h}$ of $\Omega, \Omega_f$ and $\Omega_p$. 

Let $\textbf{V}_{fh} \subset \textbf{V}_{f}$ ,$X_{ph} \subset X_{p}$,and $Q_{fh} \subset Q_{f}$ be finite element spaces such that the space pair $(\textbf{V}_{fh},Q_{fh})$ satisfies the discrete LBB condition : there exists a constant $\beta >0$,independent of mesh ,such that
\begin{align}\label{LBB}
	\underset{0 \neq q_{h}\in Q_{fh}}{inf}  \underset{0 \neq \textbf{v}_{h}\in \textbf{X}_{fh}}{sup}  
	\frac{(q_{h},\nabla \cdot \textbf{v}_{h})_{\Omega_{f}}}{\|\textbf{v}_{h}\|_{1}  \|q_{h}\|_{0}}> \beta.
\end{align}
Here we  choose MINI finite element pair for $(\textbf{V}_{fh}, Q_{fh})$ and multiscale finite element for $V_{ph}$.  Define $$U_{h}=\textbf{V}_{fh}\times X_{ph}.$$

We define Lagrange element space $\mathcal{P}:=\left\{u \in C(\bar{\Omega}), \forall T \in \mathcal{T}_h,\left.u\right|_T \in \mathcal{P}_1(T)\right\}$
and consider a triangulation $\mathcal{T}_{ph}$ and base functions $\left(\psi^i\right)_{1 \leq i \leq  N b_{\text {vertex }}}\in \mathcal{P} $ which consists of the globally continuous on $\mathcal{T}_{ph}$ and affine on each triangle $K \subset \mathcal{T}_{ph}$ functions which satisfy: $\psi^i\left(x_j\right)=\delta_{i j}$,where $\left(x_j\right)_{1 \leq j \leq  N b_{\text {vertex }}}$ are the vertices of the coarse mesh ($ N b_{\text {vertex }} $ the set of interior vertices of the coarse mesh) and $\delta_{i j}$ is the Kronecker symbol. The multiscale finite element basis functions $\left(\eta_{i}^{MsFEM}\right)_{1 \leq i \leq  N b_{\text {vertex }}}$ which take into account the multiscale of the coefficients. More precisely we compute $\eta_{i,K}^{MsFEM}$ , such that for any triangle $K \subset \mathcal{T}_{ph}$,
$$
\left\{\begin{array}{l}
	-\operatorname{div}\left(\mathbb{K}_{\epsilon}\left(\frac{x}{\varepsilon}\right) \nabla \eta_{i,K}^{MsFEM}\right)=0 \text { in } K, \\
	\eta_{i,K}^{MsFEM}=\psi^i \text { on } \partial K .
\end{array}\right.
$$

In practice, we do not have access to $ \eta_{i,K}^{MsFEM}$. We build $ \eta_{i,K}^{MsFEM,h_{finer}}$, an approximation of $ \eta_{i,K}^{MsFEM}$ on a finer embedded grid of mesh size $h_{finer}$, with $\mathcal{P}_1$ FE. See Fig.\ref{fig:basis}.

Therefore, we can get the multiscale finite element basis function $\{\eta_{i}^{MsFEM}=\cup_{K \in \mathcal{K}^h}\eta_{i,K}^{MsFEM}\}$ and a simple observation tells that $\eta_{i}^{MsFEM}$ is locally support. 

\begin{figure}
	\centering
	\includegraphics[width=0.8\linewidth]{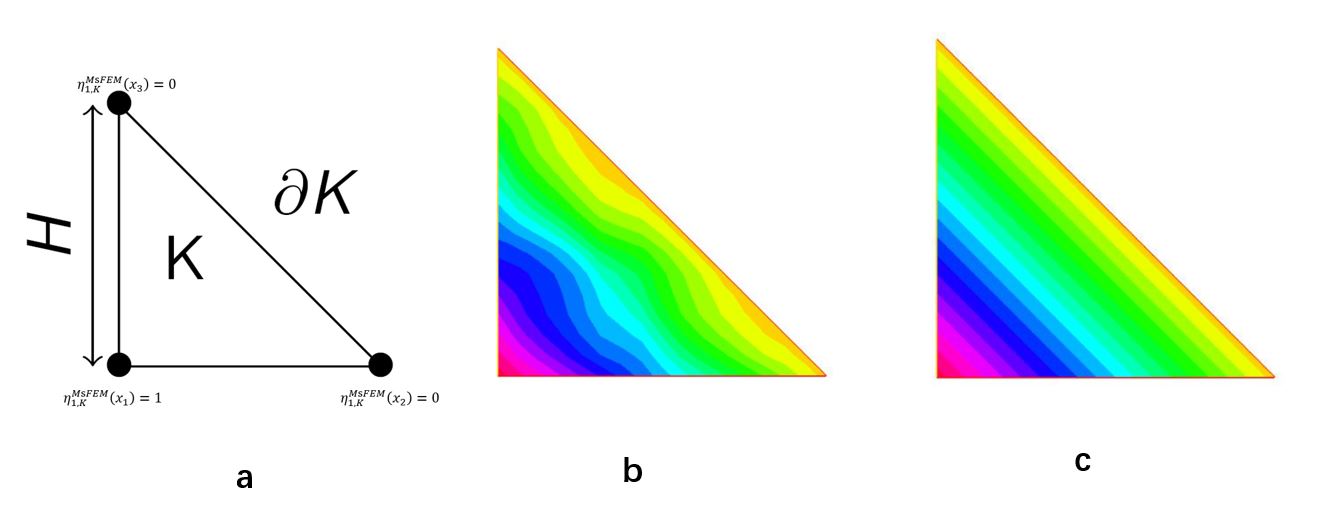}
	\caption{Sketch of MsFEM basis function design in 2D (a), Example of MsFEM basis function (b) and P1 piecewise function (c)}
	\label{fig:basis}
\end{figure}

%
%
%

Then we can introduce 
$$
X_{p h}=\operatorname{span}\left\{\eta_{i}^{MsFEM}: i=1,\cdots,N b_{\text {vertex }}; K \in \mathcal{K}^h\right\} .
$$

%

Therefore, the coupled finite element Galerkin approximation of  reads:

Coupled Finite Element Scheme: find $\underline{\mathbf{u}}_h=\left(\mathbf{u}_{f h}, \phi_{p h}\right) \in \mathbf{U}_h, p_{f h} \in Q_{f h}$ such that for any $\underline{\mathbf{v}}_h=\left(\mathbf{v}_{f h}, \phi_{p h}\right) \in \mathbf{U}_h$ and $q_{f h} \in Q_{f h}$
\begin{equation}\label{weak2}
a\left(\underline{\mathbf{u}}_h, \underline{\mathbf{v}}_h\right)-\left(p_{f h}, \nabla \cdot \mathbf{v}_{f h}\right)_{\Omega_f}+\left(q_{f h}, \nabla \cdot \mathbf{u}_{f h}\right)_{\Omega_f}=\left\langle\mathbf{F}, \underline{\mathbf{v}}_h\right\rangle_{\mathbf{U}^{\prime}}.
\end{equation}


\begin{theorem}
	  [Proofs in \cite{wilbrandt2019stokes} ]	The weak formulation (\ref{weak2}) of Stokes-Dacry problem is well-posed.
\end{theorem}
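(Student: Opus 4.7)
The plan is to cast (\ref{weak2}) as a saddle-point system and invoke the Brezzi theory. Introduce the bilinear form $b(\mathbf{v}_{fh},q_{fh})=-(q_{fh},\nabla\cdot\mathbf{v}_{fh})_{\Omega_f}$, so that (\ref{weak2}) reads: find $(\underline{\mathbf{u}}_h,p_{fh})\in\mathbf{U}_h\times Q_{fh}$ with $a(\underline{\mathbf{u}}_h,\underline{\mathbf{v}}_h)+b(\mathbf{v}_{fh},p_{fh})=\langle\mathbf{F},\underline{\mathbf{v}}_h\rangle$ and $b(\mathbf{u}_{fh},q_{fh})=0$. Well-posedness then reduces to three ingredients: continuity of $a$ and $b$, coercivity of $a$ on the constrained kernel $Z_h=\{\underline{\mathbf{v}}_h\in\mathbf{U}_h:b(\mathbf{v}_{fh},q_{fh})=0\;\forall q_{fh}\in Q_{fh}\}$, and the inf-sup condition for $b$.

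First I would verify continuity of $a$ on $\mathbf{U}_h\times\mathbf{U}_h$. The volumetric terms $2\nu(\mathbb{D}(\mathbf{u}_f),\mathbb{D}(\mathbf{v}_f))_{\Omega_f}$ and $g(\mathbb{K}_\epsilon\nabla\phi_p,\nabla\psi_p)_{\Omega_p}$ are bounded by Cauchy--Schwarz together with the assumed upper bound $\lambda_{\max}$ on $\mathbb{K}_\epsilon$. The interface integrals $(\phi_p,\mathbf{v}_f\cdot\mathbf{n}_f)_\Gamma$, $(\psi_p,\mathbf{u}_f\cdot\mathbf{n}_f)_\Gamma$, and the BJS friction term are controlled by the trace inequality $\|v\|_{L^2(\partial D)}\leq c_1\|v\|_{H^1(D)}$ from the Remark, together with the assumption that $\mathrm{trace}(\Pi)$ is bounded away from zero so that the BJS coefficient $\alpha\nu\sqrt{d}/\sqrt{\mathrm{trace}(\Pi)}$ is finite.

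Second, and the crux of the argument, is coercivity. Testing with $\underline{\mathbf{v}}=\underline{\mathbf{u}}$ causes the two interface cross-terms $g(\phi_p,\mathbf{u}_f\cdot\mathbf{n}_f)_\Gamma$ and $-g(\phi_p,\mathbf{u}_f\cdot\mathbf{n}_f)_\Gamma$ to cancel identically; this cancellation is precisely the feature of the formulation that encodes mass conservation across $\Gamma$. What remains is
\begin{equation*}
a(\underline{\mathbf{u}},\underline{\mathbf{u}})=2\nu\|\mathbb{D}(\mathbf{u}_f)\|^2_{\Omega_f}+\Big(\tfrac{\alpha\nu\sqrt{d}}{\sqrt{\mathrm{trace}(\Pi)}}P_\tau(\mathbf{u}_f),\mathbf{u}_f\Big)_\Gamma+g(\mathbb{K}_\epsilon\nabla\phi_p,\nabla\phi_p)_{\Omega_p}.
\end{equation*}
The BJS friction term is non-negative (positive coefficient, quadratic in $P_\tau\mathbf{u}_f$), and the Darcy term is bounded below by $g\lambda_{\min}\|\nabla\phi_p\|^2_{\Omega_p}$. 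Korn's inequality on $\mathbf{V}_f$ (applicable because $\mathbf{u}_f=0$ on $\Gamma_{f,D}$, using $\|\nabla v\|_{L^2}\leq c_2\|\mathbb{D}(v)\|_{L^2}$ from the Remark) and Poincar\'e on $X_p$ (using $\phi_p=0$ on $\Gamma_{p,D}$) then yield coercivity on the entire space $\mathbf{U}_h$ with constant $\alpha_0>0$; coercivity on $Z_h$ follows a fortiori.

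Third, the inf-sup condition for $b$ is exactly the discrete LBB condition (\ref{LBB}) already assumed for the MINI pair $(\mathbf{V}_{fh},Q_{fh})$. Continuity of $b$ is immediate from Cauchy--Schwarz. All hypotheses of Brezzi's saddle-point theorem are verified, and existence and uniqueness of $(\underline{\mathbf{u}}_h,p_{fh})$ follow, together with the usual stability bound in terms of $\|\mathbf{F}\|_{\mathbf{U}'}$. The main obstacle is the interface analysis: tracking the signs to obtain the exact cancellation of the coupling terms, and ensuring the BJS friction coefficient is well-defined and non-negative on $\Gamma$. This sign structure is what turns the coupled Stokes--Darcy/BJS system into a genuine coercive saddle-point problem rather than an indefinite one, and is the essential ingredient beyond what is needed for either Stokes or Darcy alone.
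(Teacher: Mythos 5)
Your proposal is correct and follows the standard saddle-point argument (continuity, cancellation of the skew-symmetric interface coupling terms to get coercivity via Korn and Poincar\'e, and the discrete LBB condition feeding into Brezzi's theorem), which is precisely the route taken in the reference \cite{wilbrandt2019stokes} that the paper cites in lieu of giving its own proof. No gaps: the sign structure you identify is indeed the essential point, and the conformity of the multiscale space $X_{ph}\subset X_p$ ensures the coercivity argument carries over to the discrete setting.
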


 \subsection{Homogenization theory and estimation results for the Darcy region}
The Darcy region exhibits multiscale characteristics. In order 
to better elucidate our model, we introduce the model from \cite{ye2020convergence}

\begin{equation}\label{problema}
	\left\{\begin{array}{rl}
		-\operatorname{div}\left(\mathbb{K}_{\epsilon} \nabla\phi_p\right)=f & \text { in } \Omega \\
		\phi_p=0 & \text { on } \Gamma_D \\
		\boldsymbol{n} \cdot \mathbb{K}_{\epsilon} \nabla \phi_p=g & \text { on } \Gamma_N
	\end{array} .\right.
\end{equation}

For simplicity, we assume the source term $f$ belongs to $L^2(\Omega)$, and the boundary term $g \in L^2\left(\Gamma_N\right)$.

Then, we introduce the multiscale expansion technique in deriving homogenized equations.
We look for $\phi_\epsilon(x)$ in the form of asymptotic expansion
\begin{equation}
\phi_\epsilon(x)=\phi_0(x, x / \epsilon)+\epsilon \phi_1(x, x / \epsilon)+\epsilon^2 \phi_2(x, x / \epsilon)+\cdots,
\end{equation}
where the functions $\phi_j(x, y)$ are periodic in $y$ with period 1 .

Here, we  referenced some definitions and theorems from \cite{ye2020convergence}.

\begin{definition}
	A (vector/matrix value) function $f$ is called periodic, if $f(\boldsymbol{x}+\boldsymbol{z})=f(\boldsymbol{x}) \quad \forall \boldsymbol{x} \in \mathbb{R}^d$ and $\forall \boldsymbol{z} \in \mathbb{Z}^d$.
\end{definition}

%

We need an interpolation operator. If the triangulation $\mathcal{T}_h$ is regular, then there exist an interpolation operator $\mathcal{I}: H^1(\Omega) \mapsto \mathcal{P}$ , $\phi_{\epsilon, \mathcal{I}} \in X_{ph}$.

\begin{theorem}\label{ye2020}
	[Proofs in \cite{ye2020convergence}]
	Under some assumptions and $\mathbb{K}_{\epsilon}$ is periodic , for (\ref{problema}) we have:
	$$
	\left|\phi_p-\phi_{\epsilon, \mathcal{I}}\right|_{1, \Omega} \leq C\left\{\epsilon^{1 / 2}+h+\sqrt{\frac{\epsilon}{h}}\right\}\left\|\phi_0\right\|_{2, \Omega}.
	$$
\end{theorem}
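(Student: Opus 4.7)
The plan is to follow the classical Hou--Wu analysis of MsFEM for periodic coefficients, adapted to the pure Darcy subproblem (\ref{problema}); the three contributions $\epsilon^{1/2}$, $h$, and $\sqrt{\epsilon/h}$ correspond respectively to the boundary-layer mismatch of the two-scale expansion, the standard coarse-grid approximation error for the homogenized solution, and the so-called resonance error between the coarse mesh and the microscopic period. First I would invoke the asymptotic expansion $\phi_\epsilon(x)=\phi_0(x)+\epsilon\phi_1(x,x/\epsilon)+\epsilon^2\phi_2(x,x/\epsilon)+\cdots$, where $\phi_0$ solves the homogenized equation with effective tensor $\mathbb{K}^\ast$ and $\phi_1(x,y)=\chi^j(y)\partial_{x_j}\phi_0(x)$ with $\chi^j$ the standard periodic cell correctors. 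Using the classical homogenization estimate, one has
\begin{equation*}
\|\phi_\epsilon-\phi_0-\epsilon\phi_1+\epsilon\theta_\epsilon\|_{1,\Omega}\leq C\,\epsilon^{1/2}\|\phi_0\|_{2,\Omega},
\end{equation*}
where $\theta_\epsilon$ is an $H^1$-boundary corrector with $\|\theta_\epsilon\|_{1,\Omega}\leq C\|\phi_0\|_{2,\Omega}$; this already supplies the $\epsilon^{1/2}$ component.

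Next I would exploit the MsFEM Galerkin orthogonality together with Cea's lemma to reduce the energy error to a best-approximation error in $X_{ph}$, so it suffices to bound $|\phi_p-\Pi_h\phi_p|_{1,\Omega}$ for a carefully chosen quasi-interpolant $\Pi_h\phi_p\in X_{ph}$. A natural candidate is $\phi_{\epsilon,\mathcal{I}}=\sum_i\phi_0(x_i)\eta_i^{\mathrm{MsFEM}}$, because on each coarse triangle $K$ the MsFEM basis satisfies $-\operatorname{div}(\mathbb{K}_\epsilon\nabla\eta_{i,K}^{\mathrm{MsFEM}})=0$ with piecewise-linear Dirichlet data, and therefore $\phi_{\epsilon,\mathcal{I}}$ is $\mathbb{K}_\epsilon$-harmonic elementwise with boundary trace $\mathcal{I}\phi_0$. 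I would then insert and subtract the two-scale ansatz inside each $K$ and write
\begin{equation*}
\phi_p-\phi_{\epsilon,\mathcal{I}}=\bigl(\phi_p-\phi_0-\epsilon\phi_1+\epsilon\theta_\epsilon\bigr)+\bigl(\phi_0-\mathcal{I}\phi_0\bigr)+\bigl(\mathcal{I}\phi_0+\epsilon\phi_1-\epsilon\theta_\epsilon-\phi_{\epsilon,\mathcal{I}}\bigr),
\end{equation*}
where the first term is handled above, the second is the standard nodal-interpolation error and contributes the $h\|\phi_0\|_{2,\Omega}$ piece, and the last term must be estimated using the special MsFEM structure.

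For the remaining correction term I would use that $\mathcal{I}\phi_0+\epsilon\phi_1-\epsilon\theta_\epsilon-\phi_{\epsilon,\mathcal{I}}$ is $\mathbb{K}_\epsilon$-harmonic on each $K$ (after subtracting a small residual coming from slow variables of $\phi_1$), so its $H^1(K)$ norm is controlled by its boundary $H^{1/2}(\partial K)$ data. The trace equals $\epsilon\phi_1-\epsilon\theta_\epsilon$ on $\partial K$ plus $\mathcal{I}\phi_0-\phi_0$, and a scaling/trace argument on a strip of width $\epsilon$ adjacent to $\partial K$ combined with the bound $\|\epsilon\phi_1\|_{L^2(\partial K)}\leq C\epsilon\|\phi_0\|_{H^2(K)}$ and an energy-extension estimate yields the resonance factor $\sqrt{\epsilon/h}\|\phi_0\|_{2,\Omega}$; this is the step I expect to be the main obstacle, since it requires a boundary-layer analysis on each coarse element and careful tracking of how the $\epsilon$-scale oscillations of $\eta_i^{\mathrm{MsFEM}}$ on $\partial K$ clash with the piecewise-linear trace $\psi^i$. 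Summing the three contributions over all $K\in\mathcal{T}_h$ and applying the coercivity of the bilinear form on $X_{ph}$ then produces the claimed estimate. Throughout, I would rely on the periodicity and uniform ellipticity of $\mathbb{K}_\epsilon$ to bound the correctors $\chi^j$ in $W^{1,\infty}$ of the unit cell, which is what makes all the constants independent of $\epsilon$ and $h$.
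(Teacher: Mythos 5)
The paper itself does not prove this theorem—it is quoted directly from \cite{ye2020convergence}—so the only meaningful benchmark is the standard MsFEM convergence argument in that line of work (Hou--Wu type analysis for periodic coefficients), and your sketch follows essentially that route: element-wise two-scale expansion of both $\phi_\epsilon$ and the $\mathbb{K}_\epsilon$-harmonic interpolant, a global boundary corrector yielding the $\epsilon^{1/2}$ term, linear interpolation of $\phi_0$ yielding the $h$ term, and an element boundary-layer (resonance) estimate yielding $\sqrt{\epsilon/h}$. Two small repairs are needed. First, the statement is a pure interpolation estimate for the specific interpolant $\phi_{\epsilon,\mathcal{I}}$, so the appeal to Galerkin orthogonality and Cea's lemma is superfluous; you should bound $|\phi_p-\phi_{\epsilon,\mathcal{I}}|_{1,\Omega}$ directly via your three-term decomposition. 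Second, the boundary-corrector bound should read $\|\theta_\epsilon\|_{1,\Omega}\le C\epsilon^{-1/2}\|\phi_0\|_{2,\Omega}$ rather than $O(1)$: it is precisely this $\epsilon^{-1/2}$ scaling, coming from the $H^{1/2}(\partial\Omega)$ norm of the oscillatory boundary data $\chi^j(x/\epsilon)\,\partial_{x_j}\phi_0$, that makes $\epsilon\theta_\epsilon$ contribute $\epsilon^{1/2}$, and the analogous corrector estimate on strips of width $\epsilon$ along $\partial K$, summed over $K\in\mathcal{T}_h$ (each element contributing a factor like $\sqrt{\epsilon h}\,$ locally), is what produces the $\sqrt{\epsilon/h}$ resonance term. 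With these adjustments your outline matches the cited proof.
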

	\section{Multiscale finite Stokes-Darcy algorithm}\label{sec:offline}
	
	In this section, we introduce the algorithm for the multiscale Stoke-Darcy equation. The algorithm is divided into two parts, namely the offline part and the online part. The offline part is used to compute the multiscale basis functions for the Darcy region. The online part utilizes the previously calculated multiscale basis functions and employs the Robin-Robin algorithm to obtain the solution of the equation.
	\subsection{Offline phase}
	In the offline phase, we solve the multiscale basis functions in parallel. This approach is adopted to enhance the efficiency of basis function generation. 
	\begin{itemize}
		\item The grid is partitioned to obtain the required coarse mesh $K \in \mathcal{T}_h$.
		\item For every $K \in \mathcal{T}_h$, we build $\mathcal{T}_{}h_{finer}^K$ and
		parallelly solve $-\nabla \cdot\left(\mathbb{K}^\epsilon(x / \epsilon)\right) \nabla \eta_i=0$ in $K, \eta_i=\varphi_i$ on $\partial K$ with $\varphi_i$ the standard $\mathrm{P} 1 $ basis function and store $\eta_i$. See Fig.\ref{fig:para}.
		
		\begin{figure}[htb]
			\centering
			\includegraphics[width=0.3\linewidth]{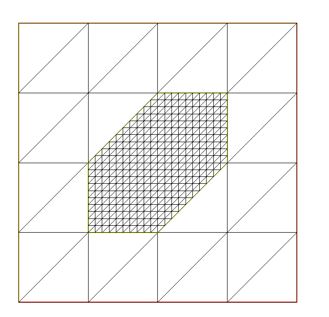}
			\caption{$\mathcal{T}_{h_{finer}}^K$ }
			\label{fig:finer}
		\end{figure}

		\item Compute and store $A_{i, j}^{\text {loc }}=\int_K\left(\mathbb{K}^\epsilon \nabla \eta_i\right)\left(\nabla \eta_j\right)$ and $B_i=B_i+\int_K \eta_i \times f_p$ with $A^{l o c}$ stifness matrix and $B$ the generic RHS.
		\item Output: Assemble and store $A$ the stiffness matrix associted with the new basis
		$
		\eta_H.
		$
	\end{itemize}

\begin{figure}[htb!]
	\centering
	\includegraphics[width=0.9\linewidth]{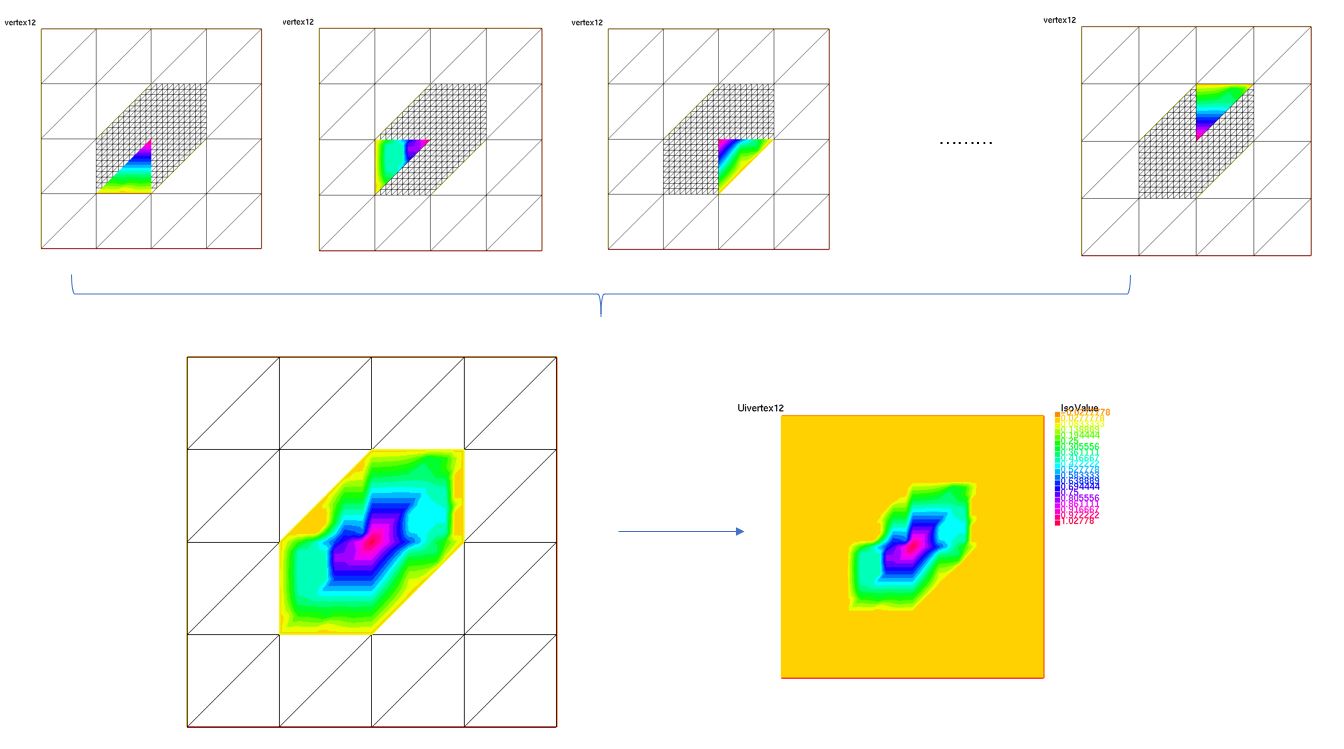}
	\caption{Parallel Efficient Generation of Base Functions}
	\label{fig:para}
\end{figure}

	\subsection{Online phase}
	
	
	In the online phase, we employ the Robin-Robin Stokes-Darcy algorithm. In this process, we utilize the multiscale basis functions generated in the offline stage. This algorithm is a decoupled algorithm, and here are the main steps:

    \begin{itemize}
    	\item Given the initial condtion $\mathbf{u}_{f}^0, p_{f}^0,\varphi_{p}^0$, $\epsilon_{iter}$,$\epsilon_{error}$,$\gamma_{f},\gamma_{p}$.
    	\item For $k=1,2, \ldots$, independently solve the Stokes and Darcy systems with Robin boundary conditions. More precisely, 
    	find $(\mathbf{u}_{fh}^{k+1},p_{fh}^{k+1})$ such that for all
     $\mathbf{v}_{fh}\in\mathbf{V}_{fh}$ and $q_{fh}\in Q_{fh}$ it is 
   
    		\[
    	a_{f}\left(\mathbf{u}_{fh}^k, \mathbf{v}_{fh}\right)+b_{f}\left(\mathbf{v}_{fh}, p_{fh}^k\right)+\gamma_{f}\left(\mathbf{u}_{fh}^k \cdot \mathbf{n}_{f}, \mathbf{v}_{fh} \cdot \mathbf{n}_{f}\right)+\alpha\left( P_\tau \mathbf{u}_{fh}^k, P_\tau \mathbf{v}_{fh}\right) =\left(\eta_{f}^k, \mathbf{v}_{fh} \cdot \mathbf{n}_{f}\right)+\left(\mathbf{g}_f, \mathbf{v}_{fh}\right),  \]
    	\[
    	b_{f}\left(\mathbf{u}_{fh}^k, q_{fh}\right)=0   .
    	\]

    	\item 
    	Find $\varphi_{ph}^{k+1}$ such that for all $\psi_{ph}\in Q_{ph}$ it is
    	
    	\[\gamma_{p} a_{p}\left(\phi_{ph}^k, \psi_{ph}\right)+\left( g \phi_{ph}^k, \psi_{ph}\right)=\left(\eta_{p}^k, \psi_{ph}\right)+\left( g_p,\psi_{ph}\right).\]
    	\item Update $\eta_{p}^{k+1}$ and $\eta_{f}^{k+1}$

    	\[\eta_{f}^{k+1}=a \eta_{p}^k+b g \phi_{ph}^k, \text{where}   \quad a=\frac{\gamma_{f}}{\gamma_{p}}, b=-1-a       
    	\]  
    	\[\eta_{p}^{k+1}=c \eta_{f}^k+d \mathbf{u}_{fh}^k \cdot \mathbf{n}_{f},\text{where}\quad	c=-1 , d=\gamma_{f}+\gamma_{p}
    	\]
    	\item Compute $\epsilon_{iter}=\|\mathbf{u}_{fh}^{k+1}-\mathbf{u}_{fh}^k\|^2+\|p_{fh}^{k+1}-p_{fh}^{k}\|^2+\|\varphi_{ph}^{k+1}-\varphi_{ph}^k\|^2$
    	\item If $\epsilon_{iter}>\epsilon_{error}$, then $k:=k+1$.
    	\item Output: get the soultion $\mathbf{u}_{fh}^{N}, p_{fh}^{N}, \varphi_{ph}^{N}$.
    \end{itemize}
	
	The offline phase involves a parallel method for generating multiscale basis functions. In the online phase, a Robin-Robin online Stokes-Darcy method is employed to obtain the solution. The algorithm offers advantages such as parallel construction of basis functions, which enhances reusability, and ensures high computational accuracy and efficiency. Additionally, the method is easily implementable for generalization and maintenance.

	\section{ Error estimates}\label{sec:convergence}
	
  In this section, we analyze the error estimate of the above coupled multiscale finite element scheme (\ref{weak2}). For later analysis, 	We define the following orthogonal projection $P_{f h}$ from $\mathbf{X}_f$ onto $\mathbf{X}_{f h}$ and $\rho_{f h}$ from $Q_f$ onto $Q_{f h}$ as: for any given $\mathbf{v}_f \in \mathbf{X}_f$ and $p_f\in Q_f$, find $P_{f h} \mathbf{v}_f \in \mathbf{X}_{f h}$ and $\rho_{f h} p_f\in Q_{f h}$  such that
  \begin{equation}\label{projection}
    \begin{aligned}
  	& 2 v\left(\mathbb{D}\left(\mathbf{v}_f-P_{f h} \mathbf{v}_f\right), \mathbb{D}\left(\mathbf{v}_{f h}\right)\right)_{\Omega_f} \\
  	& \quad+\left(\frac{\alpha v \sqrt{d}}{\sqrt{\operatorname{trace}(\boldsymbol{\Pi})}} P_\tau\left(\mathbf{v}_f-P_{f h} \mathbf{v}_f\right), \mathbf{v}_{f h}\right)_{\Gamma}+\left(p_f-\rho_{f h} p_f,\nabla\cdot \mathbf{v}_{f h} \right)_{\Omega_f}=0 \quad \forall \mathbf{v}_{f h} \in \mathbf{X}_{f h},\\
  	&\left(\nabla \cdot (\mathbf{u}_{f}-P_{f h} \mathbf{u}_{f}),q_{f h}\right)=0 \quad \forall q_{f h} \in \mathbf{Q}_{f h}.
  \end{aligned}
  \end{equation}

  For this projection $\rho_{f h}$ and the projection $P_{f h}$ from $\mathbf{X}_f$ onto $\mathbf{X}_{f h}$ in the previous section, we make the following assumption: for any given $\mathbf{v}_f \in \mathbf{H}^2\left(\Omega_f\right) \cap \mathbf{X}_f$ and $q_f \in H^1\left(\Omega_f\right)$, there holds
  \begin{equation}\label{touyin}
  	\left\|\mathbb{D}\left(\mathbf{v}_f-P_{f h} \mathbf{v}_f\right)\right\|_{\Omega_{f}}+\left\|q_f-\rho_{f h} q_f\right\|_{\Omega_{f}} \leq c h \left(\|\mathbf{v}_f\|_{2,\Omega_{f}}+\|q_f\|_{1,\Omega_{f}}\right).
  \end{equation}
  
  	Suppose the weak solution $\left(\underline{\mathbf{u}}, p_f\right)$ of the mixed Stokes/Darcy problem is local $H^2$-regular, that is $\mathbf{u}_f \in \mathbf{H}^2\left(\Omega_f\right) \cap \mathbf{V}_f, \quad p_f \in H^1\left(\Omega_f\right),\quad \phi_p \in H^2\left(\Omega_p\right) \cap X_p$ and $\phi_0 \in H^2\left(\Omega_p\right) \cap X_p$
  
	\begin{theorem}[$H^1$ norm]\label{H1norm}
		Assume that $\mathbb{K}_{\epsilon}$ is periodic, uniformly elliptic and symmtric. Let $\Omega$ be a bounded Lipschitz domain in $\mathbb{R}^2$, and let $(\mathbf{u}_f,\phi_p)$ and $(\mathbf{u}_{fh},\phi_{ph})$ be the solutions of Problems (\ref{nn}) and (\ref{weak2}), respectively.
		
 			\begin{equation}\label{H1new}
 		\begin{aligned}
 			\sqrt{\nu}\left\|\mathbb{D}\left(\mathbf{u}_f-\mathbf{u}_{f h}\right)\right\|_{\Omega_f}+\frac{\sqrt{g}}{2}\left\|\mathbb{K}_{\epsilon}^{\frac{1}{2}} \nabla (\phi_p-\phi_{p h})\right\|_{\Omega_p} &\leq \hat{C}_1\left\{\epsilon^{1 / 2}+h+\sqrt{\frac{\epsilon}{h}}\right\}\left\|\phi_0\right\|_{2, \Omega_p}\\				
 			&+\hat{C}_2h\left(\|\mathbf{u}_f\|_{2,\Omega_{f}}+\|p_f\|_{1,\Omega_{f}}\right),
 		\end{aligned}
 	\end{equation}
		where $\hat{C_1},\hat{C_2}$ depends on $\lambda_{\Max},\lambda_{\Min},g,\nu$.
\end{theorem}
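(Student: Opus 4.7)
The plan is to carry out a coupled Céa-type argument built on two ingredients: the Stokes projection $(P_{fh},\rho_{fh})$ defined in (\ref{projection}) on the fluid side, and the multiscale interpolant $\phi_{\epsilon,\mathcal{I}}$ from Theorem \ref{ye2020} on the porous side. First I would subtract (\ref{weak2}) from (\ref{nn}) to obtain the Galerkin orthogonality
\[
a(\underline{\mathbf{u}}-\underline{\mathbf{u}}_h,\underline{\mathbf{v}}_h)-(p_f-p_{fh},\nabla\cdot\mathbf{v}_{fh})_{\Omega_f}+(q_{fh},\nabla\cdot(\mathbf{u}_f-\mathbf{u}_{fh}))_{\Omega_f}=0
\]
for every $(\underline{\mathbf{v}}_h,q_{fh})\in\mathbf{U}_h\times Q_{fh}$. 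I would then split the errors as $\mathbf{u}_f-\mathbf{u}_{fh}=(\mathbf{u}_f-P_{fh}\mathbf{u}_f)+\boldsymbol{\xi}_f$ with $\boldsymbol{\xi}_f:=P_{fh}\mathbf{u}_f-\mathbf{u}_{fh}\in\mathbf{V}_{fh}$, and $\phi_p-\phi_{ph}=(\phi_p-\phi_{\epsilon,\mathcal{I}})+\xi_p$ with $\xi_p:=\phi_{\epsilon,\mathcal{I}}-\phi_{ph}\in X_{ph}$, reducing the task to controlling the discrete parts $(\boldsymbol{\xi}_f,\xi_p)$.

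Next I would test the orthogonality with $\underline{\mathbf{v}}_h=(\boldsymbol{\xi}_f,\xi_p)$ and set $q_{fh}=\rho_{fh}p_f-p_{fh}$. The diagonal evaluation $a((\boldsymbol{\xi}_f,\xi_p),(\boldsymbol{\xi}_f,\xi_p))$ is coercive: the Stokes part gives $2\nu\|\mathbb{D}(\boldsymbol{\xi}_f)\|_{\Omega_f}^2$, the Darcy part gives $g\|\mathbb{K}_{\epsilon}^{1/2}\nabla\xi_p\|_{\Omega_p}^2$ (further bounded below by $g\lambda_{\min}\|\nabla\xi_p\|_{\Omega_p}^2$), the BJS contribution is nonnegative, and the two interface coupling terms $g(\xi_p,\boldsymbol{\xi}_f\cdot\mathbf{n}_f)_\Gamma$ and $-g(\xi_p,\boldsymbol{\xi}_f\cdot\mathbf{n}_f)_\Gamma$ are skew-symmetric on the diagonal and cancel. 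This produces a lower bound proportional to the square of the left-hand side of (\ref{H1new}). On the right, the Stokes part of $a(\mathbf{u}_f-P_{fh}\mathbf{u}_f,\boldsymbol{\xi}_f)$ combined with $-(p_f-\rho_{fh}p_f,\nabla\cdot\boldsymbol{\xi}_f)_{\Omega_f}$ and the BJS projection error vanish identically by the defining identity (\ref{projection}), so from the fluid side only the interface coupling term $g(\phi_p-\phi_{\epsilon,\mathcal{I}},\boldsymbol{\xi}_f\cdot\mathbf{n}_f)_\Gamma$ survives, along with the already-canceled projection pieces whose residuals are $\mathcal{O}(h)(\|\mathbf{u}_f\|_{2,\Omega_f}+\|p_f\|_{1,\Omega_f})$ via (\ref{touyin}).

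The Darcy interpolation error $g(\mathbb{K}_{\epsilon}\nabla(\phi_p-\phi_{\epsilon,\mathcal{I}}),\nabla\xi_p)_{\Omega_p}$ is handled by Cauchy--Schwarz and $|\mathbb{K}_{\epsilon}|\le\lambda_{\max}$, invoking Theorem \ref{ye2020} to bring in the factor $C\{\epsilon^{1/2}+h+\sqrt{\epsilon/h}\}\|\phi_0\|_{2,\Omega_p}$. The remaining interface coupling $g(\xi_p,(\mathbf{u}_f-P_{fh}\mathbf{u}_f)\cdot\mathbf{n}_f)_\Gamma$ and $g(\phi_p-\phi_{\epsilon,\mathcal{I}},\boldsymbol{\xi}_f\cdot\mathbf{n}_f)_\Gamma$ are bounded using the trace inequalities in the Remark, the projection estimate (\ref{touyin}), and Theorem \ref{ye2020}. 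A weighted Young's inequality (with weights depending on $\nu$, $\lambda_{\min}$, $g$, which explains the structure of $\hat{C}_1,\hat{C}_2$) then absorbs the $\|\mathbb{D}(\boldsymbol{\xi}_f)\|_{\Omega_f}$ and $\|\mathbb{K}_{\epsilon}^{1/2}\nabla\xi_p\|_{\Omega_p}$ contributions into the coercive lower bound. A final triangle inequality adding back the projection and multiscale interpolation errors delivers (\ref{H1new}).

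The main obstacle I expect is the analysis of the interface trace terms, because the multiscale interpolant $\phi_{\epsilon,\mathcal{I}}$ carries subcell oscillations and Theorem \ref{ye2020} is stated only in the $H^1(\Omega_p)$ seminorm. To bound $\|\phi_p-\phi_{\epsilon,\mathcal{I}}\|_{L^2(\Gamma)}$ with a rate compatible with the right-hand side of (\ref{H1new}), one must either invoke the $L^2$--$H^1$ interpolation trace estimate $\|v\|_{L^2(\partial D)}\le c_0\|v\|_{L^2(D)}^{1/2}\|v\|_{H^1(D)}^{1/2}$ together with a separate $L^2$-control of the multiscale interpolation error (accepting the resulting slightly higher power of the $\{\epsilon^{1/2}+h+\sqrt{\epsilon/h}\}$ factor after a Young's inequality reabsorbs the excess), or to redistribute the interface term by testing against the Darcy equation so that only the $H^1$-seminorm rate is needed. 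Careful tracking of the $\lambda_{\min},\lambda_{\max},\nu,g$ dependencies in Young's inequality is required to obtain exactly the bound stated.
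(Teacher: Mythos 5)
Your proposal is correct and follows essentially the same route as the paper: the same splitting via the Stokes projection $(P_{fh},\rho_{fh})$ and the multiscale interpolant $\phi_{\epsilon,\mathcal{I}}$, testing the error equation with the discrete error parts, coercivity plus cancellation of the skew-symmetric interface terms and nonnegativity of the BJS term, then bounding the three residual terms with trace and weighted Young inequalities before invoking (\ref{touyin}) and Theorem \ref{ye2020}. The interface difficulty you flag is resolved in the paper simply by the trace bound $\|\hat{e}_p\|_{L^2(\Gamma)}\leq C\|\nabla\hat{e}_p\|_{\Omega_p}$ (trace inequality combined with the $H^1$-rate of Theorem \ref{ye2020}), so no sharper $L^2$ control of the multiscale interpolation error is needed.
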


	If we denote

	$$
	\begin{aligned}
		& \mathbf{u}_f-\mathbf{u}_{f h}=\left(\mathbf{u}_f-P_{f h} \mathbf{u}_f\right) +\left(P_{f h} \mathbf{u}_f-\mathbf{u}_{f h}\right)=\hat{\mathbf{e}}_f+\mathbf{e}_{f h},\\
		& \phi_p-\phi_{p h}=\left(\phi_p-\phi_{\epsilon, \mathcal{I}}\right) +(\phi_{\epsilon, \mathcal{I}}-\phi_{p h})=\hat{e}_p+e_{p h},\\
		&p_f-p_{f h}=(p_f-\rho_{f h} p_f)+(\rho_{f h} p_f-p_{f h})=\hat{e}_{\theta}+e_{\theta}.
	\end{aligned}
	$$
	
	\begin{proof}
		Recall the weak formulation and its finite element scheme
		\begin{equation}\label{weakquan}
			\begin{aligned}
				& \left\{\begin{array}{l}
					a(\underline{\mathbf{u}}, \underline{\mathbf{v}})-\left(p_f, \nabla \cdot \mathbf{v}_f\right)_{\Omega_f}=\langle F, \underline{\mathbf{v}}\rangle, \quad \forall\left(\underline{\mathbf{v}}, q_f\right) \in \mathbf{U} \times Q_f, \\
					\left(q_f, \nabla \cdot \mathbf{u}_f\right)_{\Omega_f}=0, \quad \forall q_f \in Q_f.
				\end{array}\right. \\
				& \left\{\begin{array}{l}
					a\left(\underline{\mathbf{u}}_h, \mathbf{v}_h\right)-\left(p_{f h}, \nabla \cdot \mathbf{v}_{f h}\right)_{\Omega_f}+\left(q_{f h}, \nabla \cdot \mathbf{u}_{f h}\right)_{\Omega_f}=\left\langle F, \underline{\mathbf{v}_h}\right\rangle \quad \forall\left(\underline{\mathbf{v}_h}, q_{f h}\right) \in \mathbf{U}_h \times Q_{f h}, \\
					\left(q_{f h}, \nabla \cdot \mathbf{u}_{f h}\right)_{\Omega_f}=0, \quad \forall q_{f h} \in Q_{f h}.
				\end{array}\right.
			\end{aligned}
		\end{equation}
		where 	
		\begin{equation}
			\begin{aligned}
				& a(\underline{\mathbf{u}}, \underline{\mathbf{v}})=2 \nu\left(\mathbb{D}\left(\mathbf{u}_f\right), \mathbb{D}\left(\mathbf{v}_f\right)\right)_{\Omega_f}+g\left(\phi_p, \mathbf{v}_f \cdot \mathbf{n}_f\right)_{\Gamma} \\
				& \quad+\left(\frac{\alpha v \sqrt{d}}{\sqrt{\operatorname{trace}(\boldsymbol{\Pi})}} P_{\boldsymbol{\tau}}\left(\mathbf{u}_f\right), \mathbf{v}_f\right)_{\Gamma} \\
				& \quad+g\left(\mathbb{K} \nabla \phi_p, \nabla \psi_p\right)_{\Omega_p}-g\left(\psi_p, \mathbf{u}_f \cdot \mathbf{n}_f\right)_{\Gamma} 
			\end{aligned}
		\end{equation}

		Then,
		\begin{equation}
			\begin{aligned}
				&2 \nu\left(\mathbb{D}\left(\mathbf{u}_f-\mathbf{u}_{fh}\right), \mathbb{D}\left(\mathbf{v}_{f h}\right)\right)_{\Omega_f}+g\left(\phi_p-\phi_{p h}, \mathbf{v}_{f h} \cdot \mathbf{n}_f\right)_{\Gamma} \\
				& \quad+\left(\frac{\alpha v \sqrt{d}}{\sqrt{\operatorname{trace}(\boldsymbol{\Pi})}} P_{\boldsymbol{\tau}}\left(\mathbf{u}_f-\mathbf{u}_{fh}\right), \mathbf{v}_{fh}\right)_{\Gamma} \\
				& \quad+g\left(\mathbb{K} \nabla (\phi_p-\phi_{p h}), \nabla \psi_{p h}\right)_{\Omega_p}-g\left(\psi_{p h}, \mathbf{u}_f \cdot \mathbf{n}_f-\mathbf{u}_{fh} \cdot \mathbf{n}_{fh}\right)_{\Gamma}\\
				&-\left(p_f-p_{f h},\nabla\cdot\mathbf{v}_{fh} \right)_{\Omega_f}+(q_{f h},\nabla\cdot (\mathbf{u}_f-\mathbf{u}_{f h}))_{\Omega_f}=0.
			\end{aligned}
		\end{equation}
		Using (\ref{projection}) and (\ref{weakquan}), it   deduces:
		\begin{equation}\label{1}
			\begin{aligned}
				&2 \nu\left(\mathbb{D}\left(\mathbf{e}_{f h}\right), \mathbb{D}\left(\mathbf{v}_{f h}\right)\right)_{\Omega_f}+g\left(\hat{e}_p+e_{p h}, \mathbf{v}_{f h} \cdot \mathbf{n}_f\right)_{\Gamma} \\
				& \quad+\left(\frac{\alpha v \sqrt{d}}{\sqrt{\operatorname{trace}(\boldsymbol{\Pi})}} P_{\boldsymbol{\tau}}\left(\mathbf{e}_{f h}\right), \mathbf{v}_{fh}\right)_{\Gamma} \\
				& \quad+g\left(\mathbb{K} \nabla (\hat{e}_p+e_{p h}), \nabla \psi_{p h}\right)_{\Omega_p}-g\left(\psi_{p h}, (\hat{\mathbf{e}}_f+\mathbf{e}_{f h}) \cdot \mathbf{n}_f\right)_{\Gamma}=0.
			\end{aligned}
		\end{equation}
		Taking $\mathbf{v}_{f h}=\mathbf{e}_{f h}, \psi_{p h}=e_{p h} $ into (\ref{1}), we can get
		\begin{equation}
			\begin{aligned}
				&2 \nu\left(\mathbb{D}\left(\mathbf{e}_{f h}\right), \mathbb{D}\left(\mathbf{e}_{f h}\right)\right)_{\Omega_f}+g\left(\hat{e}_p+e_{p h}, \mathbf{e}_{f h} \cdot \mathbf{n}_f\right)_{\Gamma} \\
				& \quad+\left(\frac{\alpha v \sqrt{d}}{\sqrt{\operatorname{trace}(\boldsymbol{\Pi})}} P_{\boldsymbol{\tau}}\left(\mathbf{e}_{f h}\right), \mathbf{e}_{fh}\right)_{\Gamma} \\
				& \quad+g\left(\mathbb{K} \nabla (\hat{e}_p+e_{p h}), \nabla e_{p h}\right)_{\Omega_p}-g\left(e_{p h}, (\hat{\mathbf{e}}_f+\mathbf{e}_{f h}) \cdot \mathbf{n}_f\right)_{\Gamma}=0.
			\end{aligned}
		\end{equation}
		
		Then
		\begin{equation}
			\begin{aligned}
				&2 \nu\left(\mathbb{D}\left(\mathbf{e}_{f h}\right), \mathbb{D}\left(\mathbf{e}_{f h}\right)\right)_{\Omega_f}+g\left(e_{p h}, \mathbf{e}_{f h} \cdot \mathbf{n}_f\right)_{\Gamma} \\
				& \quad+\left(\frac{\alpha v \sqrt{d}}{\sqrt{\operatorname{trace}(\boldsymbol{\Pi})}} P_{\boldsymbol{\tau}}\left(\mathbf{e}_{f h}\right), \mathbf{e}_{fh}\right)_{\Gamma} \\
				& \quad+g\left(\mathbb{K} \nabla (e_{p h}), \nabla e_{p h}\right)_{\Omega_p}-g\left(e_{p h}, (\mathbf{e}_{f h}) \cdot \mathbf{n}_f\right)_{\Gamma}\\
				&=-g\left(\hat{e}_p, \mathbf{e}_{f h} \cdot \mathbf{n}_f\right)_{\Gamma}-g\left(\mathbb{K} \nabla (\hat{e}_p), \nabla e_{p h}\right)_{\Omega_p}+g\left(e_{p h}, \hat{\mathbf{e}}_f \cdot \mathbf{n}_f\right)_{\Gamma}.
			\end{aligned}
		\end{equation}
		and we can get 
		\begin{equation}
			\begin{aligned}
				&2 v\left\|\mathbb{D}\left(\mathbf{e}_{f h}\right)\right\|_{\Omega_f}^2+\frac{g}{2}\left\|\mathbb{K}^{\frac{1}{2}} \nabla e_{p h}\right\|_{\Omega_p}^2+\underbrace{ \left(\frac{\alpha v \sqrt{d}}{\sqrt{\operatorname{trace}(\boldsymbol{\Pi})}} P_{\boldsymbol{\tau}}\left(\mathbf{e}_{fh}\right), \mathbf{e}_{fh}\right)_{\Gamma}}_{\text{\ding{192}}}\\
				&\leq \underbrace{\lvert g\left(\hat{e}_p, \mathbf{e}_{f h} \cdot \mathbf{n}_f\right)_{\Gamma}\rvert}_{\text{\ding{193}}}+\underbrace{\lvert g\left(\mathbb{K} \nabla (\hat{e}_p), \nabla e_{p h}\right)_{\Omega_p}\rvert}_{\text{\ding{194}}}+
				\underbrace{\lvert g\left(e_{p h}, \hat{\mathbf{e}}_f \cdot \mathbf{n}_f\right)_{\Gamma}\rvert}_{\text{\ding{195}}}		
			\end{aligned}
		\end{equation}
		For \ding{192} :
		\begin{equation}
			\begin{aligned}
				\left(\frac{\alpha v \sqrt{d}}{\sqrt{\operatorname{trace}(\boldsymbol{\Pi})}} P_{\boldsymbol{\tau}}\left(\mathbf{e}_{fh}\right), \mathbf{e}_{fh}\right)_{\Gamma}&=\left(\frac{\alpha v \sqrt{d}}{\sqrt{\operatorname{trace}(\boldsymbol{\Pi})}} \left(\mathbf{e}_{fh}\cdot\tau\right), \mathbf{e}_{fh}\cdot\tau\right)_{\Gamma}\\
				&=\frac{\alpha v \sqrt{d}}{\sqrt{\operatorname{trace}(\boldsymbol{\Pi})}}\|\mathbf{e}_{fh}\cdot\tau\|^2_{\Gamma}\geq 0.
			\end{aligned}
		\end{equation}

		Then, we estimate the three terms on the right hand side of the above inequality.
		
		For \ding{193} :
		\begin{equation}
			\begin{aligned}
				g\left|\left(\hat{e}_p, \mathbf{e}_{f h} \cdot \mathbf{n}_f\right)_{\Gamma}\right| & \leq g\left\|\hat{e}_p\right\|_{L^2(\Gamma)}\left\|\mathbf{e}_{f h}\right\|_{\mathbf{L}^2(\Gamma)} \leq {C g}\left\| \nabla \hat{e}_p\right\|_{\Omega_p}\left\|\mathbb{D}\left(\mathbf{e}_{f h}\right)\right\|_{\Omega_f} \\
				& \leq \frac{C^2 g^2}{4  \nu}\left\|\nabla \hat{e}_p\right\|_{\Omega_p}^2+\nu\left\|\mathbb{D}\left(\mathbf{e}_{f h}\right)\right\|_{\Omega_f}^2 .
			\end{aligned}
		\end{equation}
		
		For \ding{194} :
		\begin{equation}
			\begin{aligned}
				\lvert g\left(\mathbb{K} \nabla (\hat{e}_p), \nabla e_{p h}\right)_{\Omega_p}\rvert&\leq g\lambda_{\Max}^{\frac{1}{2}}\left\| \nabla \hat{e}_p\right\|_{\Omega_p}\left\|\mathbb{K}^{\frac{1}{2}}\nabla e_{p h}\right\|_{\Omega_p}\\
				& \leq g{\lambda_{\Max}^{\frac{1}{2}}}\left\| \nabla \hat{e}_p\right\|_{\Omega_p}\left\|\mathbb{K}^{\frac{1}{2}}\nabla e_{p h}\right\|_{\Omega_p}\\
				& \leq 2g{\lambda_{\Max}}\left\| \nabla \hat{e}_p\right\|^2_{\Omega_p}+\frac{g}{8}\left\|\mathbb{K}^{\frac{1}{2}}\nabla e_{p h}\right\|^2_{\Omega_p}.
			\end{aligned}
		\end{equation}

		For \ding{195} :
		\begin{equation}
			\begin{aligned}
				\lvert g\left(e_{p h}, \hat{\mathbf{e}}_f \cdot \mathbf{n}_f\right)_{\Gamma}\rvert &\leq 
				g\|e_{ph}\|_{\Gamma}\|\hat{\mathbf{e}}_f \cdot \mathbf{n}_f\|_{\Gamma}\\
				&\leq \frac{gC}{\lambda_{\Min}^{\frac{1}{2}}}\|\mathbb{K}^{\frac{1}{2}}\nabla e_{ph}\|_{\Omega_p}\|\hat{\mathbf{e}}_f
				\|_{1,\Omega_f}\\
				&\leq \frac{g}{8}\|\mathbb{K}^{\frac{1}{2}}\nabla e_{ph}\|_{\Omega_p}^2+\frac{2gC^2}{\lambda_{\Min}}\|\hat{\mathbf{e}}_f
				\|_{1,\Omega_f}^2.
			\end{aligned}
		\end{equation}
	Combining the above bounds and using (\ref{touyin}) and Theorem \ref{ye2020}, it yields
			\begin{equation}\label{H1}
					\begin{aligned}
							\ \nu\left\|\mathbb{D}\left(\mathbf{u}_f-\mathbf{u}_{f h}\right)\right\|_{\Omega_f}^2+\frac{g}{4}\left\|\mathbb{K}_{\epsilon}^{\frac{1}{2}} \nabla (\phi_p-\phi_{p h})\right\|_{\Omega_p}^2 &\leq\left( \frac{C^2 g^2}{4  \nu}+2g{\lambda_{\Max}}\right)\left\{\epsilon^{1 / 2}+h+\sqrt{\frac{\epsilon}{h}}\right\}^2\left\|\phi_0\right\|_{2, \Omega_p}^2\\				
							&+\frac{2g}{\lambda_{\Min}}Ch^2\left(\|\mathbf{u}_f\|_{2,\Omega_{f}}+\|p_f\|_{1,\Omega_{f}}\right)^2.
						\end{aligned}
				\end{equation} 
	Then, it is easily get the final result (\ref{H1new}).
	
	\end{proof}

\begin{remark}
	Given that  $\|\mathbf{u}_f\|_{2,\Omega_{f}}$, $\|p_f\|_{1,\Omega_{f}}$ and $\left\|\phi_0\right\|_{2,\Omega_{p}}$ are bounded, the $H^1$ norm can be controlled by  $\epsilon^{1 / 2}+h+\sqrt{\frac{\epsilon}{h}}$.
\end{remark}

Let us introduce the formal adjoint problem (\cite{hou2019solution}): find $\underline{\mathbf{w}}=\left(\mathbf{w}_f, \xi_p\right) \in \mathbf{V}$ such that

\begin{equation}
	a(\underline{\mathbf{v}}, \underline{\mathbf{w}})=(f,\underline{\mathbf{v}}) \quad \forall \underline{\mathbf{v}}=\left(\mathbf{v}_f, \psi_p\right) \in \mathbf{V} ,
\end{equation}
where $f:=(\mathbf{u}_f-\mathbf{u}_{f h},\phi_p-\phi_{p h})$.

\begin{equation}
	a(\underline{\mathbf{v}}, \underline{\mathbf{w}})=\left(\mathbf{u}_f-\mathbf{u}_{f h}, \mathbf{v}_f\right)_{\Omega_f}+g\left(\phi_p-\phi_{p h}, \psi_p\right)_{\Omega_p} \quad \forall \underline{\mathbf{v}}=\left(\mathbf{v}_f, \psi_p\right) \in \mathbf{V} .
\end{equation}
 We assume that $\underline{\mathbf{w}}$ has the regularity estimate:
 \begin{equation}
 	\|\underline{\mathbf{w}}\|_{H^2}\leq C_R \|f\|_{L^2},
 \end{equation}
and 
\begin{equation}
	\|\underline{\mathbf{w}}_0\|_{H^2}\leq C_R \|f\|_{L^2},
\end{equation}
where $\underline{\mathbf{w}}_0=\left(\mathbf{w}_f, \xi_{p,0}\right)$ and $\xi_{p,0}$ is the $O(\epsilon^0)$  asymptotic expansion of  $\xi_{p}$.
     		\begin{theorem}[L2 norm]
     			{
     	Assume that $\mathbb{K}^{\epsilon}$ is periodic, uniformly ellpitic and symmtric. Let $\Omega$ be a bounded Lipschitz domain in $\mathbb{R}^2$, and let $(\mathbf{u}_f,\phi_p)$ and $(\mathbf{u}_{fh},\phi_{ph})$ be the solutions of Problems (\ref{nn}) and (\ref{weak2}), respectively.
     	
     	\begin{equation}\label{L2}
     		\begin{aligned}
     			\left\|\left(\mathbf{u}_f-\mathbf{u}_{f h}\right)\right\|_{\Omega_f}+{g}\left\| (\phi_p-\phi_{p h})\right\|_{\Omega_p} &\leq\tilde{C_1}\left\{\epsilon +h^2+\frac{\epsilon}{h}\right\}\left\|\phi_0\right\|_{2, \Omega}\\				
     			&+\tilde{C_2}\left(h^2+\sqrt{\epsilon h}\right)\left(\|\mathbf{u}_f\|_{2,\Omega_{f}}+\|p_f\|_{1,\Omega_{f}}\right),
     		\end{aligned}
     	\end{equation}
     	where $\tilde{C_1},\tilde{C_2}$ depends on $\lambda_{\Max},\lambda_{\Min},g,\nu$.
     }
 \end{theorem}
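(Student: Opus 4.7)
The plan is to adapt the Aubin--Nitsche duality argument to the mixed Stokes--Darcy setting, using the adjoint problem introduced just above with data $f := (\mathbf{u}_f-\mathbf{u}_{fh},\, \phi_p-\phi_{ph})$. Taking $\underline{\mathbf{v}} = \underline{\mathbf{u}} - \underline{\mathbf{u}}_h$ as the test function in the adjoint equation immediately yields
$$\|\mathbf{u}_f-\mathbf{u}_{fh}\|_{\Omega_f}^2 + g\|\phi_p-\phi_{ph}\|_{\Omega_p}^2 = a(\underline{\mathbf{u}}-\underline{\mathbf{u}}_h,\, \underline{\mathbf{w}}).$$
Subtracting the Galerkin orthogonality coming from (\ref{weakquan}) against a judicious $\underline{\mathbf{w}}_h\in\mathbf{U}_h$, and using the weighted projection (\ref{projection}) to kill the pressure--divergence cross term $(p_f-p_{fh}, \nabla\cdot(\cdot))_{\Omega_f}$, I would replace $\underline{\mathbf{w}}$ on the right by $\underline{\mathbf{w}}-\underline{\mathbf{w}}_h$, choosing $\underline{\mathbf{w}}_h=(P_{fh}\mathbf{w}_f,\, \xi_{\epsilon,\mathcal{I}})$ with $\xi_{\epsilon,\mathcal{I}}$ the multiscale interpolant of Theorem \ref{ye2020} applied to the dual head $\xi_p$.

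Next I would estimate $|a(\underline{\mathbf{u}}-\underline{\mathbf{u}}_h,\, \underline{\mathbf{w}}-\underline{\mathbf{w}}_h)|$ term by term by Cauchy--Schwarz, the trace inequalities recalled in the Remark, and non-negativity of the BJS boundary form (already exploited in the $H^1$ proof), reaching
$$|a(\underline{\mathbf{u}}-\underline{\mathbf{u}}_h,\, \underline{\mathbf{w}}-\underline{\mathbf{w}}_h)| \leq C \bigl(\|\mathbb{D}(\mathbf{u}_f-\mathbf{u}_{fh})\|_{\Omega_f} + \|\mathbb{K}_\epsilon^{1/2}\nabla(\phi_p-\phi_{ph})\|_{\Omega_p}\bigr) \bigl(\|\mathbb{D}(\mathbf{w}_f-P_{fh}\mathbf{w}_f)\|_{\Omega_f} + \|\nabla(\xi_p-\xi_{\epsilon,\mathcal{I}})\|_{\Omega_p}\bigr).$$
The first factor is already controlled by Theorem \ref{H1norm}. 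For the dual factor, (\ref{touyin}) gives $\|\mathbb{D}(\mathbf{w}_f-P_{fh}\mathbf{w}_f)\|_{\Omega_f} \leq Ch\|\mathbf{w}_f\|_{2,\Omega_f}$ and Theorem \ref{ye2020} gives $\|\nabla(\xi_p-\xi_{\epsilon,\mathcal{I}})\|_{\Omega_p} \leq C(\epsilon^{1/2}+h+\sqrt{\epsilon/h})\|\xi_{p,0}\|_{2,\Omega_p}$; the dual regularity hypotheses $\|\underline{\mathbf{w}}\|_{H^2},\,\|\underline{\mathbf{w}}_0\|_{H^2} \leq C_R\|f\|_{L^2}$ stated just before the theorem then trade these $H^2$ norms for $\|f\|_{L^2}$ itself.

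Dividing through by $\|f\|_{L^2} \simeq \|\mathbf{u}_f-\mathbf{u}_{fh}\|_{\Omega_f} + g\|\phi_p-\phi_{ph}\|_{\Omega_p}$ and inserting the $H^1$ bound (\ref{H1new}) yields a product of the form $(\epsilon^{1/2}+h+\sqrt{\epsilon/h})\cdot [(\epsilon^{1/2}+h+\sqrt{\epsilon/h})\|\phi_0\|_{2,\Omega_p} + h(\|\mathbf{u}_f\|_{2,\Omega_f}+\|p_f\|_{1,\Omega_f})]$. Expanding the squared factor produces the leading terms $\epsilon + h^2 + \epsilon/h$ in front of $\|\phi_0\|_{2,\Omega_p}$, while the cross product $h(\epsilon^{1/2}+h+\sqrt{\epsilon/h}) = \epsilon^{1/2}h + h^2 + \sqrt{\epsilon h}$ collapses to $h^2 + \sqrt{\epsilon h}$ using $\epsilon^{1/2}h \leq \sqrt{\epsilon h}$ for $h \leq 1$, reproducing (\ref{L2}). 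The main obstacle I anticipate lies in the mixed-form Galerkin orthogonality: since the stated adjoint problem carries no explicit adjoint pressure, the pressure--divergence cross term must be cancelled exactly via the divergence-preserving second identity of (\ref{projection}), and the interface trace contributions of the form $g(\hat e_p,(\mathbf{w}_f-P_{fh}\mathbf{w}_f)\cdot\mathbf{n}_f)_\Gamma$ and $g(\xi_p-\xi_{\epsilon,\mathcal{I}},\hat{\mathbf{e}}_f\cdot\mathbf{n}_f)_\Gamma$ require a careful use of the trace inequality $\|v\|_{L^2(\partial D)} \leq c_0\|v\|_{L^2(D)}^{1/2}\|v\|_{H^1(D)}^{1/2}$ to avoid spurious factors before arriving at the advertised rate.
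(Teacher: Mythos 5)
Your proposal follows essentially the same route as the paper: an Aubin--Nitsche duality argument with the stated adjoint problem, Galerkin orthogonality to replace $\underline{\mathbf{w}}$ by $\underline{\mathbf{w}}-\underline{\mathbf{w}}_h$, the interpolation bounds $h\|\mathbf{w}_f\|_{2,\Omega_f}$ and $(\epsilon^{1/2}+h+\sqrt{\epsilon/h})\|\xi_{p,0}\|_{2,\Omega_p}$ from (\ref{touyin}) and Theorem \ref{ye2020}, the dual $H^2$ regularity to absorb $\|f\|_{L^2}$, and finally insertion of the $H^1$ bound (\ref{H1new}) with the same expansion of the product of rates. The only difference is cosmetic: you estimate $a(\underline{\mathbf{u}}-\underline{\mathbf{u}}_h,\underline{\mathbf{w}}-\underline{\mathbf{w}}_h)$ term by term (being explicit about the pressure--divergence and interface contributions), whereas the paper compresses this into a single $H^1$-continuity bound for $a$ with a generic interpolant $\mathcal{I}^h\underline{\mathbf{w}}$.
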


\begin{proof}
	\begin{equation}
		\begin{aligned}
			\left\|\underline{\mathbf{u}}-\underline{\mathbf{u}}_h\right\|_{\Omega}^2 & =\left({\mathbf{u}}-{\mathbf{u}}_h, {\mathbf{u}}-{\mathbf{u}}_h\right)_{\Omega_f}+g\left(\phi_p-\phi_{p h}, \phi_p-\phi_{p h}\right)_{\Omega_p} \\
			& =a\left(\underline{\mathbf{u}}-\underline{\mathbf{u}}_h, \underline{\mathbf{w}}\right) \\
			& =a\left(\underline{\mathbf{u}}-\underline{\mathbf{u}}_h, \underline{\mathbf{w}}-\mathcal{I}^h \underline{\mathbf{w}}\right) \\
			& \leq C\left\|\underline{\mathbf{u}}-\underline{\mathbf{u}}_h\right\|_{1,\Omega} \| \underline{w}-\mathcal{I}^h \underline{w}\|_{{H^{1}}(\Omega)}\\
			&\leqslant C\left\|\underline{\mathbf{u}}-\underline{\mathbf{u}}_h\right\|_{1,\Omega} \|\sqrt{\left(\left(\nabla\left(w_f-\mathcal{I}^h w_f\right), \nabla\left(w_f-\mathcal{I}^h w_f\right)\right)+g\left(\mathbb{K} \nabla\left(\xi_p-\mathcal{I}^h \xi_p\right),\nabla\left(\xi_p-\mathcal{I}^h \xi_p\right)\right)\right.}\\
			&\leqslant C_1\left\|\underline{\mathbf{u}}-\underline{\mathbf{u}}_h\right\|_{1,\Omega}\left[\left\|w_f-\mathcal{I}^h w_f\right\|_{1,\Omega_f}+\sqrt{g} \lambda_{\lambda_{{\Max }}}^{\frac{1}{2}}\left\|\xi_p-\mathcal{I}^h \xi_p\right\|_{1,\Omega_p}\right]\\
			&\leqslant C_2\left\|\underline{\mathbf{u}}-\underline{\mathbf{u}}_h\right\|_{1,\Omega}\left[h\left\|w_f\right\|_{2,\Omega_f}+\sqrt{g} \lambda_{\text {max }}^{\frac{1}{2}}\left(\varepsilon^{\frac{1}{2}}+h+\sqrt{\frac{\varepsilon}{h}}\right)\left\|\xi_{p,0}\right\|_{2,\Omega_p}\right]\\
			&\leqslant C_3\left\|\underline{\mathbf{u}}-\underline{\mathbf{u}}_h\right\|_{1,\Omega}
			\left(\varepsilon^{\frac{1}{2}}+h+\sqrt{\frac{\varepsilon}{h}}\right)\left(\left\|w_f\right\|_{2,\Omega_f}+\left\|\xi_{p,0}\right\|_{2,\Omega_p}\right)\\
			&\leqslant C_3\left\|\underline{\mathbf{u}}-\underline{\mathbf{u}}_h\right\|_{1,\Omega}\left(\varepsilon^{\frac{1}{2}}+h+\sqrt{\frac{\varepsilon}{h}}\right)\left\|\underline{\mathbf{u}}-\underline{\mathbf{u}}_h\right\|_{\Omega}.
		\end{aligned}
	\end{equation}

Therefore, 
\begin{equation}
	\left\|\underline{\mathbf{u}}-\underline{\mathbf{u}}_h\right\|_{\Omega}\leqslant C_3\left\|\underline{\mathbf{u}}-\underline{\mathbf{u}}_h\right\|_{1,\Omega}\left(\varepsilon^{\frac{1}{2}}+h+\sqrt{\frac{\varepsilon}{h}}\right).
\end{equation}

 Using Theorem \ref{H1norm}, it yields the final result (\ref{L2})
\end{proof}
	
\begin{remark}
	Given that  $\|\mathbf{u}_f\|_{2,\Omega_{f}}$, $\|p_f\|_{1,\Omega_{f}}$ and $\left\|\phi_0\right\|_{2,\Omega_{p}}$ are bounded, the $L^2$ norm can be controlled by $\epsilon +h^2+\frac{\epsilon}{h}$.
\end{remark}

	\section{Numerical experiments}\label{sec:numerical}
	In this section, we study the accuracy of the multiscale method through numerical computations. The model problem is solved using the multiscale method with base functions defined by linear
    condition (Msfem). Since it is very diffcult to construct a test problem with exact solution and suffcient generality, we use resolved numerical solutions in place of exact solutions. The numerical results are compared with the theoretical analysis. Additionally, we present a numerical example illustrating the method's application to more complex problems involving separable scales, nonseparable scales and high-contrast case.

	    \subsection{Implementation}
	    
	    We outline the implementation here and define some notation to be frequently used below. For convenience, we assume that the multiscale Darcy region is situated on the unit square. Let $N$ be the number of elements in the $x$ and $y$ directions. The size of mesh is thus $h=1 / N$. To compute the base functions, each element is discretized into $M \times M$ subcell elements with size $h_{finer}=h / M$. Triangular elements are used in all numerical tests.
	    
	    In the offline phase, we utilize P1 elements to compute basis functions within each element.  During this process, the Message Passing Interface(MPI) is employed to enhance efficiency, and the stiffness matrix and right-hand side are stored for subsequent online computations.
	   
	   Because it is very difficult to construct a genuine $2 \mathrm{D}$ steady Stokes-Darcy model with an exact solution, reference solutions are used as the exact solutions for the test problems. In all numerical examples below, the resolved solutions are obtained using standard FEM. We use the numerical computation libraries MUMPS and PETSc within FreeFEM++ to calculate the reference solution.
	   Due to the multiscale nature of the permeability in our Darcy region, we employ a finer mesh within the Darcy region when computing the reference solution. We have computed a reference solution of this numerical example using Talor-Hood finite element and P2 element with a mesh of size $H_D=1 / 2048$ (in Darcy region), $H_S=1 / 512$ (in Stokes region). The purpose of computing the reference solution is to obtain the error.
	   To enhance computational speed, we utilize a domain decomposition parallel approach for computing the reference solution. We define the reference solution $\underline{\mathbf{u}}^{\text{reference}}=(\mathbf{u}_f^{\text{reference}},\phi_p^{\text{reference}})$.
	   
	   To facilitate the comparison among different schemes, we use the following shorthands: FEM-FEM stands for  MINI elements in the Stokes region and P1 elements in the Darcy region and FEM-MsFEM stands for MINI elements in the Stokes region and multiscale basis function in the Darcy region. We define the numerical solution $\underline{\mathbf{u}}^{\text{numerical }}=(\mathbf{u}_f^{\text{numerical }},\phi_p^{\text{numerical }})$.

	    \subsection{Numerical result}
	    We define some norms to demonstrate the error.
	    \begin{equation*}
	    	\left\|e_{\mathbf{u}_f}\right\|_{L^2}=\left\|\mathbf{u}_f^{\text{reference}}-\mathbf{u}_f^{\text{numerical}}\right\|_{L^2(\Omega_{f})},	\left\|e_{\mathbf{u}_f}\right\|_{H^1}=\left\|\mathbf{u}_f^{\text{reference}}-\mathbf{u}_f^{\text{numerical}}\right\|_{H^1(\Omega_{f})}
	    \end{equation*}
	    \begin{equation*}
	    	\left\|e_{\phi_p}\right\|_{L^2}=\left\|\phi_p^{\text{reference}}-\phi_p^{\text{numerical}}\right\|_{H^1(\Omega_{p})},\left\|e_{\phi_p}\right\|_{L^2}=\left\|\phi_p^{\text{reference}}-\phi_p^{\text{numerical}}\right\|_{H^1(\Omega_{p})}
	    \end{equation*}
        \subsubsection{Example 1}
        In this example, we we consider nonseparable scales and solve (\ref{DARCY2}) with
        $$
      \mathbb{K}_{\epsilon}=\frac{1}{(2+P \sin (2 \pi x / \epsilon))(2+P \sin (2 \pi y / \epsilon))},
        $$
        where $P$ is a parameter controlling the magnitude of the oscillation. We take $P=1.8$ in this example. The right hand side function $\mathbf{g}_f(x,y),g_p(x,y)$ is zero        On $\partial \Omega_p$, we impose $\phi=0$ and on the $\partial \Omega_f$, $u_f=[\sin (\pi x), 0],\text { on }(0,1) \times\{2\}$,
        $u_f=[0,0] , \text { on }\{0\} \times(1,2) \cup\{1\} \times(1,2)$.
        We set all physical parameters $\nu ,\alpha, g$ equal to 1. The Robin-Robin coefficients are set to $\gamma_{f}=0.1,\gamma_{p}=1$.
        
        \begin{figure}[htb]
        	\centering
        	\includegraphics[width=0.5\linewidth]{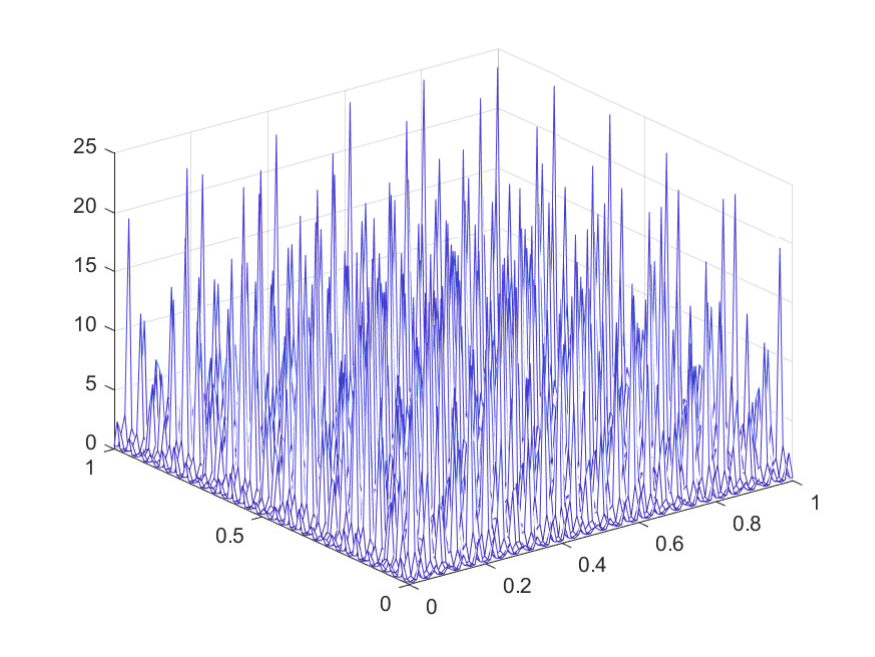}
        	\caption{$\mathbb{K}_{\epsilon}$ in Example 1}
        	\label{fig:k1}
        \end{figure}

       The result of FEM-MsFEM error with varying $h$ but fixed $\epsilon=0.008$ and $h_{finer}=1/2048$ are shown in Table \ref{tab:ex1msfem}. When observing the errors in the Darcy part, we first fix the small parameter $\varepsilon$ and gradually refine the grid. As $h$ decreases and $h \gg \varepsilon$, we can observe that the convergence order of the $L^2$ error for $e_{\phi_p}$ degrades from around $+2$ to around $-1$. This effectively validates the term O($h^2$) to O($\varepsilon/h$) in the $L^2$ error analysis. Additionally, the convergence order of the $H^1$ error degrades from $+1$ to $-0.5$, providing a good validation of the term O($h$) to O($\sqrt{\varepsilon/h}$) in the $L^2$ error analysis. 
       Furthermore, when observing the errors $e_{\mathbf{u}_f}$ in the Stokes part, we use standard MINI elements in the Stokes region. However, it can be seen that the $L^2$ error in Stokes does not initially reach the optimal order of 2, and with the gradual decrease in grid size $h$, the convergence order of the Stokes $L^2$ error also shows a decreasing trend. The reason for this decrease is influenced by the Darcy region, as the Darcy region also experiences a reduction in order at this point in time. Next, when observing the order of the velocity $H^1$, it can be noted that the $H^1$ order does not initially reach the optimal order, but with a decrease in $h$, the $H^1$ error order remains almost constant.
        
       The result of FEM-FEM error with varying $h$ but fixed $\epsilon=0.008$ are shown in Table \ref{tab:ex1fem}. 
       When observing errors in the Darcy part, we first fix the small parameter $\varepsilon$. As $h$ decreases and $h \gg \varepsilon$, we can observe that the convergence orders of the $L^2$ error and $H^1$ error for pressure become unstable. Comparing with Table \ref{tab:ex1msfem}, it is noticeable that for $h \gg \varepsilon$, the error accuracy of FEM-MsFEM is higher than that of FEM-FEM. For instance, when $h=1/16$, the accuracy of $e_{\phi_p}$ in the FEM-MsFEM algorithm can reach $10^{-5}$, whereas FEM-FEM can only achieve $10^{-4}$.
       When observing errors in the Stokes part, it can be noted that FEM-FEM does not reach the optimal order, and with the  decrease in grid size $h$, the convergence order of the Stokes $L^2$ error also shows a decreasing trend. The order of $H^1$ does not initially reach the optimal order, but with a decrease in $h$, the $H^1$ error order remains almost constant. This result is consistent with the situation in FEM-MsFEM and is influenced by the Darcy region.
       
       	FEM-MsFEM error at  $\frac{\epsilon}{h}=0.32$,  $N=32$. are shown in Table \ref{tab:ex1table3}. Our main goal is to observe errors in FEM-MsFEM when fixing $\varepsilon/h$, where $h_{finer}=h/N$. In the Darcy part, as $h$ gradually decreases, it can be observed that the $L^2$ and $H^1$ errors remain almost constant. This validates the importance of O($\varepsilon/h$) in the $L^2$ error and O($\sqrt{\varepsilon/h}$) in the $H^1$ error. In the Stokes region, the convergence orders of $L^2$ and $H^1$ errors fail to reach the optimal order, and there is no significant change in the orders.
        	\begin{table}[htb]
        	\caption{ FEM-MsFEM error with varying $h$ but fixed $\epsilon=0.008$ and $h_{finer}=1/2048$.}
        	\resizebox{\linewidth}{!}{
        		\begin{tabular}{c|r|r|r|r|r|r|r|r}
        			\hline\hline$h$ & $\left\|e_{\mathbf{u}_f}\right\|_{L^2}$ & Order & $\left\|e_{u_f}\right\|_{H^1}$ & Order & $\left\|e_{\phi_p}\right\|_{L^2}$ & Order & $\left\|e_{\phi_p}\right\|_{H^1}$ & Order \\
        			\hline 
        			1/4	&	6.07E-02	&		&	1.67E+00	&		&	8.99E-04	&		&	1.67E-02	&		\\
        			1/8	&	1.80E-02	&	1.76	&	9.44E-01	&	0.82	&	1.97E-04	&	2.19	&	7.56E-03	&	1.14	\\
        			1/16	&	4.79E-03	&	1.91	&	5.07E-01	&	0.90	&	8.43E-05	&	1.23	&	4.82E-03	&	0.65	\\
        			1/32	&	1.27E-03	&	1.92	&	2.69E-01	&	0.92	&	1.75E-04	&	-1.05	&	5.05E-03	&	-0.07	\\
        			1/64	&	3.43E-04	&	1.89	&	1.41E-01	&	0.93	&	3.25E-04	&	-0.89	&	6.82E-03	&	-0.44	\\
        			1/128	&	1.22E-04	&	1.49	&	7.41E-02	&	0.93	&	5.56E-04	&	-0.78	&	9.47E-03	&	-0.47	\\
        			1/256	&	5.79E-05	&	1.07	&	3.85E-02	&	0.95	&	3.79E-04	&	0.55	&	7.85E-03	&	0.27	\\
        			
        			\hline
        		\end{tabular}
        	\label{tab:ex1msfem}
        	}
        \end{table}

        \begin{table}
        	\caption{FEM-FEM error with varying $h$ but fixed $\epsilon=0.008$.}
        	\resizebox{\linewidth}{!}{
        		\begin{tabular}{c|r|r|r|r|r|r|r|r}
        			\hline\hline$h$ & $\left\|e_{\mathbf{u}_f}\right\|_{L^2}$ & Order & $\left\|e_{u_f}\right\|_{H^1}$ & Order & $\left\|e_{\phi_p}\right\|_{L^2}$ & Order & $\left\|e_{\phi_p}\right\|_{H^1}$ & Order \\
        			\hline
        			1/4	&	6.07E-02	&		&	 1.67E+00	&		&	5.31E-04	&		&	1.44E-02	&		\\
        			1/8	&	1.80E-02	&	1.76	&	9.44E-01	&	0.82	&	9.33E-04	&	-0.81	&	1.34E-02	&	0.10	\\
        			1/16	&	4.81E-03	&	1.90	&	5.07E-01	&	0.90	&	8.61E-04	&	0.12	&	1.26E-02	&	0.09	\\
        			1/32	&	1.28E-03	&	1.91	&	2.69E-01	&	0.92	&	9.03E-04	&	-0.07	&	1.27E-02	&	-0.01	\\
        			1/64	&	3.62E-04	&	1.83	&	1.41E-01	&	0.93	&	7.89E-04	&	0.19	&	1.19E-02	&	0.09	\\
        			1/128	&	1.55E-04	&	1.22	&	7.41E-02	&	0.93	&	8.76E-04	&	-0.15	&	1.23E-02	&	-0.04	\\
        			1/256	&	7.20E-05	&	1.11	&	3.85E-02	&	0.95	&	4.81E-04	&	0.86	&	8.73E-03	&	0.49	\\
        			\hline

        		\end{tabular}
        	\label{tab:ex1fem}
        	}
        \end{table}

        	\begin{table}
        		\caption{FEM-MsFEM error at  $\frac{\epsilon}{h}=0.32$,  $N=32$.}
        		\resizebox{\linewidth}{!}{
        			\begin{tabular}{c|r|r|r|r|r|r|r|r|r}
        			\hline	\hline$h$ & $\epsilon$ & $\left\|e_{u_p}\right\|_{L^2}$ & Order & $\left\|e_{u_f}\right\|_{H^1}$ & Order & $\left\|e_{\phi_p}\right\|_{L^2}$ & Order & $\left\|e_{\phi_p}\right\|_{H^1}$ & Order \\
        				\hline 1/16	&	0.02	&	4.80E-03	&		&	5.07E-01	&		&	3.05E-04	&		&	6.89E-03	&		\\
        				1/32	&	0.01	&	1.27E-03	&	1.92	&	2.69E-01	&	0.92	&	2.88E-04	&	0.08	&	5.99E-03	&	0.20	\\
        				1/64	&	0.005	&	3.38E-04	&	1.91	&	1.41E-01	&	0.93	&	2.80E-04	&	0.04	&	5.76E-03	&	0.06	\\
        				1/128	&	0.0025	&	9.71E-05	&	1.80	&	7.41E-02	&	0.93	&	2.71E-04	&	0.05	&	5.63E-03	&	0.03	\\
        				\hline
        			\end{tabular}
        		}
        		\label{tab:ex1table3}
        	\end{table}

         \subsubsection{Example 2}
         
          In this example, we consider nonseparable scales and solve (\ref{DARCY2}) with
         
       	$$
       \mathbb{K}_{\epsilon}=\frac{1}{4+P(\sin (2 \pi x / \epsilon)+\sin (2 \pi y / \epsilon))},
       $$
         \begin{figure}[htb]
         	\centering
         	\includegraphics[width=0.5\linewidth]{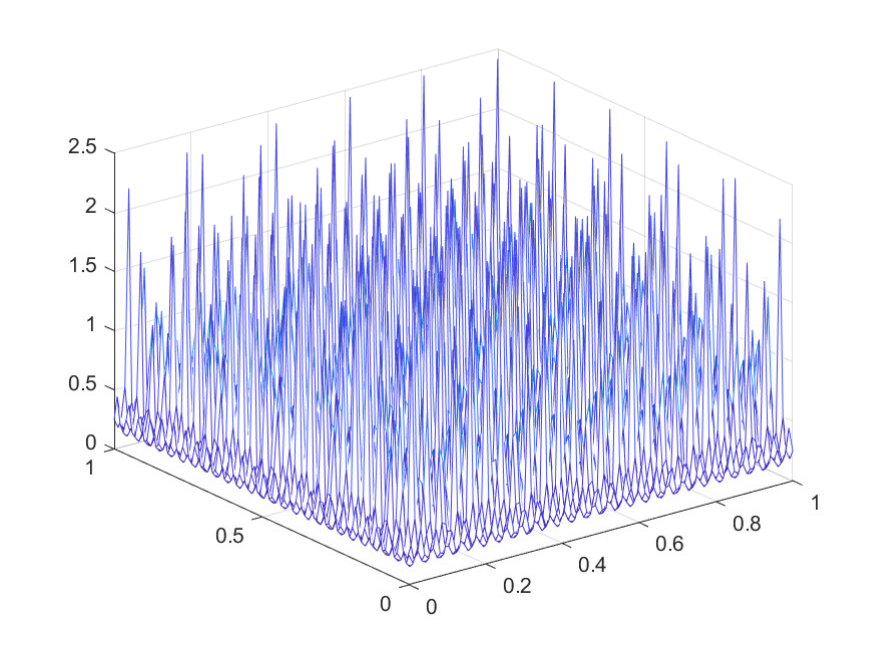}
         	\caption{$\mathbb{K}_{\epsilon}$ in Example 2}
         	\label{fig:k2}
         \end{figure}
         where $P$ is a parameter controlling the magnitude of the oscillation. We take $P=1.5$ in this example. The right hand side function $\mathbf{g}_f(x,y),g_p(x,y)$ is zero
         
         On $\partial \Omega_p$, we impose $\phi=0$ and on the $\partial \Omega_f$,	$u_f=[\sin (\pi x), 0],\text { on }(0,1) \times\{2\}$,
         $u_f=[0,0] , \text { on }\{0\} \times(1,2) \cup\{1\} \times(1,2)$.
         We set all physical parameters $\nu ,\alpha, g$ equal to 1. The Robin-Robin coefficients are set to $\gamma_{f}=0.1,\gamma_{p}=1$.


        The result of FEM-MsFEM error with varying $h$ but fixed $\epsilon=0.008$ and $h_{finer}=1/2048$ are shown in Table \ref{tab:ex2msfem}.  When examining errors in the Darcy part, we initially set a fixed small parameter and progressively refine the grid. As $h$ decreases, with $h \gg \varepsilon$, we observe a degradation in the convergence order of the $L^2$ error for $e_{\phi_p}$ from around $+2$ to approximately $-1$. This observation effectively confirms the presence of terms O($h^2$) to O($\varepsilon/h$) in the $L^2$ error analysis. Additionally, the convergence order of the $H^1$ error decreases from $+1$ to $-0.5$, providing robust validation for terms O($h$) to O($\sqrt{\varepsilon/h}$) in the $L^2$ error analysis.
         Moreover, when evaluating errors $e_{\mathbf{u}_f}$ in the Stokes part, standard MINI elements are employed in the Stokes region. However, it is evident that the $L^2$ error in Stokes does not initially attain the optimal order of 2. As the grid size $h$ gradually decreases, the convergence order of the Stokes $L^2$ error exhibits a declining trend. This decline is attributed to the influence of the Darcy region. Subsequently, when inspecting the order of the velocity $H^1$, it is observed that the $H^1$ order does not initially reach the optimal level, yet with a decrease in $h$, the $H^1$ error order remains relatively constant.

         The result of FEM-FEM error with varying $h$ but fixed $\epsilon=0.008$ in Table \ref{tab:ex2fem}. 
         When observing errors in the Darcy part, we first fix the small parameter $\varepsilon$. As $h$ decreases and $h \gg \varepsilon$, we can observe that the convergence orders of the $L^2$ error and $H^1$ error for pressure become unstable. Comparing with Table \ref{tab:ex2msfem}, it is noticeable that for $h \gg \varepsilon$, the error accuracy of FEM-MsFEM is higher than that of FEM-FEM. For instance, when $h=1/16$, the accuracy of $e_{\phi_p}$ in the FEM-MsFEM algorithm can reach $10^{-5}$, whereas FEM-FEM can only achieve $10^{-4}$.
         When observing errors in the Stokes part, it can be noted that FEM-FEM does not reach the optimal order, and with the  decrease in grid size $h$, the convergence order of the Stokes $L^2$ error also shows a decreasing trend. The order of $H^1$ does not initially reach the optimal order, but with a decrease in $h$, the $H^1$ error order remains almost constant. This result is consistent with the situation in FEM-MsFEM and is influenced by the Darcy region.
         
         FEM-MsFEM error at  $\frac{\epsilon}{h}=0.32$,  $N=32$. are shown in Table \ref{tab:ex2tab3}. Our main goal is to observe errors in FEM-MsFEM when fixing $\varepsilon/h$, where $h_{finer}=h/N$. In the Darcy part, as $h$ gradually decreases, it can be observed that the $L^2$ and $H^1$ errors remain almost constant. This validates the importance of O($\varepsilon/h$) in the $L^2$ error and O($\sqrt{\varepsilon/h}$) in the $H^1$ error. In the Stokes region, the convergence orders of $L^2$ and $H^1$ errors fail to reach the optimal order, and there is no significant change in the orders.

         \begin{table}[htb]
         	\caption{FEM-MsFEM error with varying $h$ but fixed $\epsilon=0.008$ and $h_{finer}=1/2048$.}
         	\resizebox{\linewidth}{!}{
         		\begin{tabular}{c|r|r|r|r|r|r|r|r}
         			\hline\hline$h$ & $\left\|e_{\mathbf{u}_f}\right\|_{L^2}$ & Order & $\left\|e_{u_f}\right\|_{H^1}$ & Order & $\left\|e_{\phi_p}\right\|_{L^2}$ & Order & $\left\|e_{\phi_p}\right\|_{H^1}$ & Order \\
         			\hline 
         			1/4	&	6.07E-02	&		&	1.67E+00	&		&	2.00E-03	&		&	3.12E-02	&		\\
         			1/8	&	1.80E-02	&	1.76	&	9.44E-01	&	0.82	&	5.06E-04	&	1.98	&	1.36E-02	&	1.20	\\
         			1/16	&	4.79E-03	&	1.91	&	5.07E-01	&	0.90	&	8.65E-05	&	2.55	&	6.93E-03	&	0.97	\\
         			1/32	&	1.26E-03	&	1.92	&	2.69E-01	&	0.92	&	6.31E-05	&	0.45	&	5.08E-03	&	0.45	\\
         			1/64	&	3.34E-04	&	1.92	&	1.41E-01	&	0.93	&	1.23E-04	&	-0.96	&	5.89E-03	&	-0.21	\\
         			1/128	&	9.44E-05	&	1.82	&	7.41E-02	&	0.93	&	2.28E-04	&	-0.90	&	7.94E-03	&	-0.43	\\
         			1/256	&	3.28E-05	&	1.53	&	3.85E-02	&	0.95	&	1.70E-04	&	0.43	&	6.75E-03	&	0.23	\\
         			
         			\hline
         		\end{tabular}
         	}
         	\label{tab:ex2msfem}
         \end{table}
         
         \begin{table}[htb!]
         	\caption{FEM-FEM error with varying $h$ but fixed $\epsilon=0.008$.}
         	\resizebox{\linewidth}{!}{
         		\begin{tabular}{c|r|r|r|r|r|r|r|r}
         		\hline	\hline$h$ & $\left\|e_{\mathbf{u}_f}\right\|_{L^2}$ & Order & $\left\|e_{u_f}\right\|_{H^1}$ & Order & $\left\|e_{\phi_p}\right\|_{L^2}$ & Order & $\left\|e_{\phi_p}\right\|_{H^1}$ & Order \\
         			\hline 
         			1/4	&	6.07E-02	&		&	1.67E+00	&		&	1.68E-03	&		&	2.91E-02	&		\\
         			1/8	&	1.80E-02	&	1.76	&	9.44E-01	&	0.82	&	4.96E-04	&	1.76	&	1.53E-02	&	0.93	\\
         			1/16	&	4.80E-03	&	1.91	&	5.07E-01	&	0.90	&	3.30E-04	&	0.59	&	1.17E-02	&	0.39	\\
         			1/32	&	1.27E-03	&	1.92	&	2.69E-01	&	0.92	&	4.21E-04	&	-0.35	&	1.07E-02	&	0.13	\\
         			1/64	&	3.37E-04	&	1.92	&	1.41E-01	&	0.93	&	2.68E-04	&	0.65	&	9.88E-03	&	0.11	\\
         			1/128	&	1.03E-04	&	1.70	&	7.41E-02	&	0.93	&	3.69E-04	&	-0.46	&	1.00E-02	&	-0.02	\\
         			1/256	&	3.73E-05	&	1.47	&	3.85E-02	&	0.95	&	2.11E-04	&	0.81	&	7.47E-03	&	0.42	\\
         			\hline
         		\end{tabular}
         	}
         \label{tab:ex2fem}
         \end{table}
         
         \begin{table}
         	\caption{FEM-MsFEM error at  $\frac{\epsilon}{h}=0.32$,  $N=32$.}
         	\resizebox{\linewidth}{!}{
         		\begin{tabular}{c|r|r|r|r|r|r|r|r|r}
         				\hline\hline$h$ & $\epsilon$ & $\left\|e_{u_p}\right\|_{L^2}$ & Order & $\left\|e_{u_f}\right\|_{H^1}$ & Order & $\left\|e_{\phi_p}\right\|_{L^2}$ & Order & $\left\|e_{\phi_p}\right\|_{H^1}$ & Order \\
         			\hline 1/16	&	0.02	&	4.79E-03	&		&	5.07E-01	&		&	1.16E-04	&		&	8.05E-03	&		\\
         			1/32	&	0.01	&	1.27E-03	&	1.92	&	2.69E-01	&	0.92	&	9.57E-05	&	0.28	&	5.76E-03	&	0.48	\\
         			1/64	&	0.005	&	3.33E-04	&	1.92	&	1.41E-01	&	0.93	&	9.42E-05	&	0.02	&	5.13E-03	&	0.17	\\
         			1/128	&	0.0025	&	8.87E-05	&	1.91	&	7.41E-02	&	0.93	&	9.07E-05	&	0.06	&	4.90E-03	&	0.07	\\
         			\hline
         		\end{tabular}
         	}
         \label{tab:ex2tab3}
         \end{table}
         
%
%
%


	\clearpage
	
	\bibliographystyle{plain}
	\bibliography{reference}
\end{document}